\definecolor{darkazure}{HTML}{0059b3}
\definecolor{azure}{HTML}{007fff}
\definecolor{paleazure}{HTML}{a6d2ff}
\definecolor{vdarkgold}{HTML}{807100}
\definecolor{darkergold}{HTML}{b39e00}
\definecolor{darkgold}{HTML}{f2d400}
\definecolor{gold}{HTML}{ffdf00}
\definecolor{palegold}{HTML}{fff7bf}
\def\subsection{\@startsection{subsection}{2}%
  \z@{.5\linespacing\@plus.7\linespacing}{.3\linespacing}%
  {\normalfont\bfseries}}
\def\subsubsection{\@startsection{subsubsection}{3}%
  \z@{5ex\@plus1ex}{-1ex}%
  {\normalfont\itshape}}%
\numberwithin{equation}{section}
\declaretheorem[style=plain,numberlike=equation]{theorem}
\declaretheorem[style=plain,numberlike=theorem]{lemma}
\declaretheorem[style=plain,numberlike=theorem]{proposition}
\declaretheorem[style=definition,numberlike=theorem]{definition}
\declaretheorem[style=definition,numberlike=theorem]{remark}
\declaretheorem[style=definition,numberlike=theorem]{example}
\setlist{smallin, topsep=1pt}
\crefname{enumi}{}{}
\let\cite\relax
\DeclareRobustCommand{\cite}{%
  \let\new@cite@pre\@gobble
  \@ifnextchar[\new@cite{\@citex[]}}
\def\new@cite[#1]{\@ifnextchar[{\new@citea{#1}}{\@citex[#1]}}
\def\new@citea#1{\def\new@cite@pre{#1}\@citex}
\def\@cite#1#2{[{\new@cite@pre\space#1\if\relax\detokenize{#2}\relax\else, #2\fi}]}
\newcommand{\initialsspace}{0.1em}
\newcommand{\splitlist}[1]{\@splitlist#1\@nil}
\def\@splitlist#1\@nil{%
  \if\relax\detokenize{#1}\relax
    \expandafter\@gobble
  \else
    \expandafter\@firstofone
  \fi
  {\@spl@tlist#1.\@nil}%
}
\def\@spl@tlist#1.#2\@nil{%
    \def\tmpA{#1}%
    \def\tmpB{#2}%
    \def\tmpP{.}%
    \ifx\tmpB\tmpP%
        #1.%
    \else{%
        \ifx\tmpA\@empty%
        \else%
                #1.\nobreak\hspace{\initialsspace}%
        \fi%
    }%
    \fi%
  \if\relax\detokenize{#2}\relax
    \expandafter\@firstoftwo
  \else
    \expandafter\@secondoftwo
  \fi
  {\unskip}%
  {\@spl@tlist#2\@nil}%
}
\NewDocumentCommand\set{s m}{%
    \IfBooleanTF#1%
    {\left\{ #2 \right\}}%
    {\{#2\}}%
}
\NewDocumentCommand\setbuild{s m m}{%
    \IfBooleanTF#1%
    {\ensuremath{\left\{\, #2 \, \middle| \, #3 \,\right\}}}%
    {\ensuremath{\{\, #2 \, \mid \, #3 \,\}}}%
}
\NewDocumentCommand\spangle{s m m m}{%
    \IfBooleanTF#1%
    {\ensuremath{\left\langle\, #2 \, \middle| \, #3 \,\right\rangle_{#4}}}%
    {\ensuremath{\langle\, #2 \, \mid \, #3 \,\rangle_{#4}}}%
}
\newcommand{\symdiff}{\mathbin{\triangle}} 
\newcommand{\msetbracegap}{\mkern-4.5mu}
\NewDocumentCommand\multiset{s m}{%
    \IfBooleanTF#1%
    {\left\msetbracegap\{\left\{ #2 \right\}\msetbracegap\right\}}%
    {\{\msetbracegap\{#2\}\msetbracegap\}}%
}
\newcommand{\onto}{\twoheadrightarrow}
\DeclarePairedDelimiter{\abs}{\lvert}{\rvert}
\newcommand{\N}{\mathbb{N}}
\newcommand{\iso}{\cong} 
\newcommand{\tensor}{\otimes}
\newcommand{\la}{\lambda}
\newcommand{\sstyle}{\scriptstyle}
\DeclareMathOperator{\id}{id}
\DeclareMathOperator{\im}{im}
\newcommand{\bbone}{\mathbbm{1}}
\newcommand{\one}[1]{\bbone\mkern-1mu[\,#1\,]} 
\newcommand{\blank}{{-}}    
\DeclareMathOperator{\sign}{sgn}
\newcommand{\dual}{\ast}       
\newcommand{\Y}[1]{[#1]}
\newcommand{\RSSYT}{\mathrm{RSSYT}} 
\newcommand{\CSYT}{\mathrm{CSYT}}
\newcommand{\SSYT}{\mathrm{SSYT}}
\newcommand{\col}{\mathrm{col}}
\newcommand{\bcol}[2]{\col_{#1}(#2)}
\newcommand{\ecol}[3]{\col_{#1}(#2)}
\newcommand{\row}{\mathrm{row}}
\newcommand{\brow}[2]{\row_{#1}(#2)}
\newcommand{\CPP}{\mathrm{CPP}}
\newcommand{\RPP}{\mathrm{RPP}}
\DeclareMathOperator{\rowstab}{rstab}
\newcommand{\rstab}[1]{\rowstab(#1)}
\newcommand{\colsorted}[1]{#1^{\shortdownarrow}}
\newcommand{\entryset}{\mathcal{B}}
\newcommand{\rt}[1]{[#1]} 
\newcommand{\act}[1]{|#1|} 
\newcommand{\rsym}{\operatorname{rsym}}
\newcommand{\exrsym}[1]{
    \longsubsum{\sum}
    {\tau \ssin \rcosets{\RPP(\lambda)}{\rstab{#1}}}
    {#1 \cdot \tau}
}
\newcommand{\e}{\operatorname{e}}
\newcommand{\polyt}[1]{\e(#1)} 
\newcommand{\schwa}{\operatorname{\rotatebox[origin=c]{180}{$\mathrm{e}$}}}
\newcommand{\cpt}[1]{\schwa(#1)} 
\newcommand{\copolytab}{\mathsf{copolytab}}
\newcommand{\polytab}{\mathsf{polytab}}
\newcommand{\ppa}{\cdot}        
\newcommand{\GRel}[1]{\mathsf{R}_{#1}}
\newcommand{\rotatedR}{\operatorname{\rotatebox[origin=c]{180}{$\mathsf{R}$}}}
\newcommand{\coGRel}[1]{\rotatedR_{#1}}
\newcommand{\T}[2]{#2^{\otimes\abs{#1}}}
\newcommand{\tabspace}[2]{\Sym^{#1} #2}
\newcommand{\Gar}{\mathsf{GR}}
\newcommand{\coGar}{\mathsf{G}\hspace{-1pt}\rotatedR}
\newcommand{\GRspace}[2]{\Gar^{#1}(#2)}
\newcommand{\GR}[2]{\GRspace{#1}{#2}}
\newcommand{\coGRspace}[2]{\coGar^{#1}(#2)}
\newcommand{\coGR}[2]{\coGRspace{#1}{#2}}
\newcommand{\alttabs}[2]{\Wedge^{#1'} #2}
\newcommand{\wedgemap}{\mathsf{\Lambda}}
\DeclareMathOperator{\Sym}{Sym}
\newcommand{\wwedge}{\mathchoice{{\textstyle\bigwedge}}%
    {{\bigwedge}}%
    {{\textstyle\wedge}}%
    {{\scriptstyle\wedge}}}
\DeclareMathOperator{\Wedge}{\wwedge}
\newcommand\scalesim{\scalebox{.8}{$\SavedStyle\sim$}}
\newcommand{\colless}{<_{\mathrm{c}}}
\newcommand{\colsim}{\sim_{\mathrm{c}}}
\newcommand{\colleq}{%
    \mathrel{\ensurestackMath{\ThisStyle{%
        \stackengine{-.6\LMex}{\SavedStyle{<_{\mathrm{c}}}}{%
            \rotatebox{-25}{%
                \scalesim
            }%
        }{U}{l}{F}{T}{S}
    }}}%
}
\newcommand{\rowless}{<_{\mathrm{r}}}
\newcommand{\rowsim}{\sim_{\mathrm{r}}}
\newcommand{\rowleq}{%
    \mathrel{\ensurestackMath{\ThisStyle{%
        \stackengine{-.6\LMex}{\SavedStyle{<_{\mathrm{r}}}}{%
            \rotatebox{-25}{%
                \scalesim
            }%
        }{U}{l}{F}{T}{S}
    }}}%
}
\DeclareMathOperator{\stab}{stab}
\newcommand{\stabof}[1]{\stab(#1)}
\newcommand{\lcosets}[2]{#1/#2}
\newcommand{\rcosets}[2]{#2\backslash#1}
\newcommand{\dcosets}[3]{#1\backslash #2 / #3}
\protected\def\verythinspace{%
  \ifmmode
    \mskip0.5\thinmuskip
  \else
    \ifhmode
      \kern0.08334em
    \fi
  \fi
}
\newcommand{\ssin}{\verythinspace\in\verythinspace} 
\DeclareMathOperator{\Hom}{Hom} 
\newcommand{\GL}{\mathrm{GL}}
\renewcommand{\epsilon}{\varepsilon}
\renewcommand{\phi}{\varphi}
\renewcommand{\leq}{\leqslant}
\renewcommand{\geq}{\geqslant}
\LetLtxMacro{\oldfaktor}{\faktor}
\renewcommand{\faktor}[2]{\,\oldfaktor{#1}{#2}\,}
\let\oldr@@t\r@@t
\def\r@@t#1#2{%
\setbox0=\hbox{$\oldr@@t#1{#2\,}$}\dimen0=\ht0
\advance\dimen0-0.2\ht0
\setbox2=\hbox{\vrule height\ht0 depth -\dimen0}%
{\box0\lower0.4pt\box2}}
\LetLtxMacro{\oldsqrt}{\sqrt}
\renewcommand*{\sqrt}[2][\ ]{\oldsqrt[#1]{#2}}
\newlength{\truelen}
\newcommand{\padbox}[3][c]{%
    \settowidth{\truelen}{\ensuremath{#2}}%
    \ifdim\truelen < #3%
        \makebox[#3][#1]{\ensuremath{#2}}%
    \else%
        \ensuremath{#2}%
    \fi%
}
\newlength{\minlen}
\newcommand{\flexbox}[3][l]{%
    \settowidth{\minlen}{\ensuremath{#3}}%
    \padbox[#1]{#2}{\minlen}%
}
\newcommand{\longsubsum}[3]{%
    \flexbox{%
        \smashoperator[r]{#1_{#2}}\;#3%
    }{%
        {\scriptstyle#2}%
    }%
}
\newcommand{\ytab}[1]{%
    \ytableaushort{#1}%
}
\newcommand{\ytabsmall}[1]{%
    \ytableausetup{smalltableaux}%
    \ytableaushort{#1}%
    \ytableausetup{nosmalltableaux}%
}
\newcommand{\yrowtab}[1]{%
    \ytableausetup{tabloids}%
    \ytableaushort{#1}%
    \ytableausetup{notabloids}%
}
\newcommand{\ycoltab}[1]{%
    \ytableausetup{coltabloids}%
    \ytableaushort{#1}%
    \ytableausetup{notabloids}%
}
\newcommand{\ycoltabsmall}[1]{%
    \ytableausetup{coltabloids,smalltableaux}%
    \ytableaushort{#1}%
    \ytableausetup{notabloids,nosmalltableaux}%
}
\newcommand{\drawtwosides}[2]{\draw (#1,#2)--++(1,0)--++(0,1);} 
\newcommand{\drawthreesides}[2]{\draw (#1,#2+1)--++(0,-1)--++(1,0)--++(0,1);} 
\newcommand{\drawrthreesides}[2]{\draw (#1,#2)--++(1,0)--++(0,1)--++(-1,0);} 
\newcommand{\drawfoursideswstyle}[3]{\draw[#3] (#1,#2+1)--++(0,-1)--++(1,0)--++(0,1)--cycle;}  
\newcommand{\drawfoursides}[2]{\drawfoursideswstyle{#1}{#2}{}} 
\newcommand{\xlen}{0.95}
\newcommand{\ylen}{0.95}
\newcommand{\ABshift}{0.07,1-0.21} 
\newcommand{\fb}{0} 
\newcommand{\ABstyle}[1]{\(\mathrlap{\sstyle \scaleobj{0.95}{#1}}\)} 
\newcommand*{\semibig}[1]{\scaleobj{1.08}{#1}}
\newcommand{\semibigbackslash}{\semibig\backslash}
\newcommand{\ring}{R}
\title[An explicit construction of the Weyl module]%
{An explicit construction of the Weyl module as a quotient of symmetric tensors by dual Garnir relations}
\author{Eoghan McDowell}
\let\@makefnmark\relax \let\@thefnmark\relax
\def\makedetails{\footnotetext{\scalefont{0.91}%
    \ifx\@affiliation\@empty\else%
    \affiliationname\ \@affiliation \@addpunct.\ %
    \fi%
    \ifx\emails\@empty\else%
    \emailname\ \@email \@addpunct.\ %
    \fi%
    \ifx\web@page\@empty\else%
    \webpagename\ \web@page \@addpunct.%
    \fi
}}
\def\affiliationname{\textit{Affiliation}:}
\def\affiliation#1{\gdef\@affiliation{#1}}
\let\@affiliation\@empty
\def\emailname{\textit{Email}:}
\def\email#1{\gdef\@email{#1}}
\let\@email\@empty
\def\webpagename{\textit{Webpage}:}
\def\webpage#1{\gdef\web@page{#1}}
\let\web@page\@empty
\affiliation{University of Bristol}
\email{\href{mailto:eoghan.mcdowell@bristol.ac.uk}{eoghan.mcdowell@bristol.ac.uk}}
\begin{document}

\begin{abstract}
The Weyl modules are the standard modules for the Schur algebra.
Their duals (the costandard modules) have well-known constructions as quotients of exterior powers and as submodules of symmetric powers.
This paper presents analogous constructions for the Weyl modules themselves, introducing relations that are a non-trivial dualisation of the Garnir relations.
More generally, our constructions describe endofunctors on the category of representations of any group.
\end{abstract}

\vspace*{0.8cm} 
\vspace*{-1cm}
\maketitle
\makedetails
\thispagestyle{empty}

\vspace*{-0.5cm}
\section{Introduction}

The \emph{Weyl modules} are the \emph{standard modules} for the Schur algebra; in particular, their heads form a complete irredundant set of simple modules in the category of polynomial representations of \(\GL_n(k)\), where \(n\in\N\) and \(k\) is an infinite field.
It is well-known how to construct the \emph{dual} of a Weyl module (a \emph{costandard module}) as a quotient of an exterior power and as a submodule of a symmetric power (see for example \cite[\S4]{green2006polynomial}, \cite[\S8]{fulton1997youngtableaux}, \cite[\S2]{boeck2021plethysms} or \cite[\S2]{mcdowell2021inverseSchur}, or \cite[Proposition~4.5.2]{donkin1998qschur} for the \(q\)-analogue).

Explicit constructions for the Weyl module itself, however, are lacking.
The purpose of this paper is to provide such constructions.
Comparisons with the existing literature are made in \Cref{remark:comparisons}.

Both the existing and new constructions are functorial: they in fact describe, for each partition \(\la\), a pair of endofunctors on the category of representations of any group over any unital commutative ring.
We therefore work in this generality throughout.
To obtain the Weyl and dual Weyl modules, apply the functors to the natural representation of \(\GL_n(k)\).

\subsection{Results}

We call the functors for constructing the dual Weyl modules the \emph{Schur endofunctors} and denote them \(\nabla^\la\).
(The Schur endofunctors are often simply called Schur functors; we use the term ``endofunctor'' to differentiate them from functors arising in Schur--Weyl duality, such as the functor \(f\) in \cite[\S6]{green2006polynomial}.)
We call the functors for constructing the Weyl modules the \emph{Weyl endofunctors} and denote them \(\Delta^\la\).

\begin{definition}
\label{def:endofunctors}
Let \(G\) be a group and let \(\ring\) be a unital commutative ring.
Let \(n \in \N\) and let \(\la\) be a partition of \(n\).
The \emph{Schur endofunctor} \(\nabla^\la\) on the category of representations of \(G\) over \(\ring\) is defined as the quotient of an exterior power by Garnir relations, or equivalently as the subspace of a symmetric power consisting of polytabloids, as described in \Cref{sec:Schur_endofunctors}:
\[
    \nabla^\la(\blank)
    \iso \faktor{\Wedge^{\la'}(\blank)}{\Gar^\la(\blank)}
    \iso \polytab(\blank).
\]
The \emph{Weyl endofunctor} \(\Delta^\la\) is defined by pre- and post-composing the Schur endofunctor with duality (as defined in \Cref{subsec:duality}):
\[
    \Delta^\la(\blank) = (\nabla^\la(\blank^\dual))^\dual.
\]
\end{definition}

This definition of the Schur endofunctor matches the presentations of Fulton's \emph{Schur module} \(E^\la\) \cite[\S8.1, Lemma 1, p.~107]{fulton1997youngtableaux}, and of Green's dual Weyl module \(D_{\la,K}\) \cite[\S4.6]{green2006polynomial}.
The definition of the Weyl endofunctor as the dual of the Schur endofunctor then guarantees that it yields the Weyl module, but does not give an explicit model for it.

After recalling familiar multilinear algebra in \Cref{sec:tableaux}, and the well-known construction of the Schur endofunctor in \Cref{sec:Schur_endofunctors}, we give our explicit constructions for the Weyl endofunctor in \Cref{sec:Weyl_endofunctors}.
In particular, we define the following spaces:
\begin{itemize}
    \item
\(\copolytab^\la(V) \subseteq \Wedge^{\la'}(V)\), the space of copolytabloids (defined in \Cref{subsec:copolytabloids});
    \item 
\(\Sym_\la(V) \subseteq V^{\otimes n}\), the space of row-symmetrised tensors (defined in \Cref{subsec:sym_and_row_tabloids_and_symmetrisations});
    \item 
\(\coGar^\la(V) \subseteq \Sym_\la(V)\), the space of dual Garnir relations (defined in \Cref{subsec:dual_Garnir_relations}).
\end{itemize}
We establish in \Cref{sec:copolytabs_obey_dual_Garnir} the following properties of these spaces.

\begin{theorem}
\label{thm:copolytabloids_obey_dual_Garnir}
Let \(G\) be a group, let \(\ring\) be a unital commutative ring, and let \(V\) be a representation of \(G\) over \(\ring\) which is free and finitely-generated as an \(\ring\)-module.
Let \(n \in \N\) and let \(\la\) be a partition of \(n\).
\begin{enumerate}[(a)]
    \item 
There is a short exact sequence
\begin{center}
\begin{tikzcd}
0 \arrow[r] & \coGar^{\lambda}(V) \arrow[r] & \Sym_\la(V) \arrow[r, "\wedgemap"] & \copolytab^\la(V) \arrow[r] & 0
\end{tikzcd}
\end{center}
where \(\wedgemap\) is the restriction of the projection map \(V^{\otimes n} \to \Wedge^{\la'}(V)\), which sends a row-symmetrised tensor labelled by a tableau \(t\) to the copolytabloid labelled by the tableau \(t\).
\item
The space \(\copolytab^\la(V)\) has \(\ring\)-basis consisting of copolytabloids labelled by semistandard tableaux.
\end{enumerate}
\end{theorem}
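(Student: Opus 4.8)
The plan is to deduce both parts from the classical construction of the Schur endofunctor recalled in \Cref{sec:Schur_endofunctors}, by dualising. First fix an $\ring$-basis of $V$: the $G$-action then plays no role, as $\wedgemap$, $\copolytab^\la(V)$ and $\coGar^\la(V)$ are all $G$-stable, so (a) and (b) reduce to statements about $\ring$-modules. With a basis fixed, $\Sym_\la(V)$ is the $\ring$-submodule of $V^{\otimes n}$ fixed by the permutations of tensor factors that rearrange cells within rows, and its $\ring$-basis is the orbit sums $\rho(t)$ of monomial tableaux $t$; hence its $\ring$-dual is canonically $\bigotimes_i \Sym^{\la_i}(V^\dual)$, the product of honest symmetric powers. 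Dually $\Wedge^{\la'}(V)^\dual$ is canonically $\Wedge^{\la'}(V^\dual)$, as $V$ is free and finitely generated. The map $\wedgemap$ is surjective onto $\copolytab^\la(V)$ by definition, the latter being spanned by the copolytabloids $\wedgemap(\rho(t))$; so right-exactness in (a) is immediate.

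The central step is to identify the transpose $\wedgemap^\dual\colon \Wedge^{\la'}(V^\dual) \to \bigotimes_i \Sym^{\la_i}(V^\dual)$, under the canonical dualities above, with the structure map of the Schur endofunctor on $V^\dual$ recalled in \Cref{sec:Schur_endofunctors} --- the composite $\Wedge^{\la'}(V^\dual) \onto \polytab^\la(V^\dual) \hookrightarrow \bigotimes_i \Sym^{\la_i}(V^\dual)$, whose image is $\nabla^\la(V^\dual)$ and whose kernel is $\Gar^\la(V^\dual)$. Indeed the dual of ``project onto column exterior powers'' is ``antisymmetrise down columns (as an embedding into tensor space)'', the dual of ``include the row-symmetric tensors'' is ``multiply along rows'', and their composite is exactly the classical map; so the identification is an unwinding of definitions, modulo matching cell orderings and signs. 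Granting it, dualising back shows that $\wedgemap$ factors as a surjection $\Sym_\la(V) \onto (\nabla^\la(V^\dual))^\dual = \Delta^\la(V)$ followed by the injection $\Delta^\la(V) \hookrightarrow \Wedge^{\la'}(V)$ dual to $\Wedge^{\la'}(V^\dual) \onto \nabla^\la(V^\dual)$. Therefore $\copolytab^\la(V) = \im\wedgemap \iso \Delta^\la(V) = (\nabla^\la(V^\dual))^\dual$, which is free of rank $\abs{\SSYT(\la)}$ by the standard basis theorem for $\nabla^\la$.

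Two further ingredients complete the proof. First, \emph{straightening}: modulo $\coGar^\la(V)$, every $\rho(t)$ is an $\ring$-combination of $\rho(s)$ with $s$ semistandard, via the algorithm dual to classical Garnir straightening, terminating because each step replaces tableaux by ones strictly smaller in a fixed well-founded order (e.g.\ dominance on column contents). So the semistandard copolytabloids span $\copolytab^\la(V)$; as there are $\abs{\SSYT(\la)}$ of them and $\copolytab^\la(V)$ is free of that rank, and a surjection between free modules of equal finite rank is an isomorphism, they form a basis --- this is (b). Second, \emph{vanishing}: $\coGar^\la(V) \subseteq \ker\wedgemap$, by a direct computation dual to the classical vanishing of Garnir relations in an exterior power, a dual Garnir element being built precisely so that antisymmetrising its columns forces a repeated basis vector. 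These give (a): for $x \in \ker\wedgemap$, straightening writes $x \equiv \sum_s c_s\rho(s) \pmod{\coGar^\la(V)}$ with $s$ semistandard, and $\wedgemap(x) = 0$ then forces $\sum_s c_s \wedgemap(\rho(s)) = 0$, so all $c_s = 0$ by the linear independence just established, whence $x \in \coGar^\la(V)$.

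The main obstacle is the straightening algorithm: one must exhibit, for each tableau $t$ that is not semistandard, combinations of dual Garnir relations rewriting $\rho(t)$ in terms of $\rho$'s of strictly smaller tableaux, and check that this terminates with a semistandard answer --- the dualisation of classical Garnir straightening, which as there requires choosing the relations and the order with care. An alternative that avoids appealing to $\nabla^\la$ proves (b) directly, by showing that the expansion of $\wedgemap(\rho(s))$ for semistandard $s$ in the standard monomial basis of $\Wedge^{\la'}(V)$ is unitriangular --- a distinguished term with coefficient $1$, indexed injectively by $s$, with all other terms strictly lower in a fixed order --- which yields linear independence over an arbitrary $\ring$ and the direct-summand property; either way, the straightening and the vanishing computation remain the technical core.
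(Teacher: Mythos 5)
Your overall architecture is close to the paper's: linear independence of semistandard copolytabloids (either by unitriangularity as in \Cref{lemma:semistandard_polysyms_are_lin_indep}, or via the identification \(\copolytab^\la(V)\iso\Delta^\la(V)\)), spanning via straightening modulo \(\coGar^\la(V)\) together with the vanishing \(\coGar^\la(V)\subseteq\ker\wedgemap\), and then the kernel identification by exactly the argument you give at the end. Your ``central step'' is essentially \Cref{thm:copolytab_dual_to_Schur}, which the paper proves by a genuine computation rather than a pure unwinding; if you take that route you also need \(\nabla^\la(V^\dual)\) to be an \(\ring\)-module direct summand of \(\Sym^\la(V^\dual)\) (not merely free) so that the restriction map onto \((\nabla^\la(V^\dual))^\dual\) is surjective --- this follows from the unitriangularity of semistandard polytabloids and should be said. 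The paper itself notes that this duality route can replace the straightening argument for the \emph{spanning} statement, so that portion of your plan is legitimate.

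The genuine gap is that the two steps you defer as ``dual to the classical computation'' are precisely where naive dualisation fails, as \Cref{eg:row_Garnir_relations} demonstrates. The literal dual of a Garnir relation --- an unweighted sum over coset representatives of \(S_A\times S_B\) in \(S_{A\sqcup B}\) --- is in general \emph{not} in \(\ker\wedgemap\); the variant that sums over all of \(\RPP(\lambda)\) is in the kernel and does admit the easy repeated-entry/sign-cancellation argument you sketch, but it acquires scalar factors (a factor \(2\) already for \(\la=(2,2)\)) and so cannot drive a straightening algorithm over \(\Z\) or a general \(\ring\). The actual dual Garnir relations carry the index coefficients \(\abs*{\rstab{u}:\rstab{u}\cap(S_{A\sqcup B}\times S_{\Y{\lambda}\setminus A\sqcup B})}\), and showing that \emph{these} lie in \(\ker\wedgemap\) (\Cref{prop:row_Garnir_relations_evaluate_to_zero}) requires first rewriting the double-coset expression using \Cref{lemma:coset_rep_expressions} so that the sum over row permutations is decoupled from the sum over \(S_{A\sqcup B}\); only then does the pairing by a transposition \(\omega\) with \(\omega^\sigma\in\CPP(\lambda)\) produce the cancellation. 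Your one-sentence justification (``antisymmetrising its columns forces a repeated basis vector'') applies to the wrong elements and does not supply this. Likewise, your straightening step is asserted but not exhibited: one must make the specific snake choice of \Cref{lemma:ordered_expression_for_row_snake_relation} --- rows \(i,i+1\) adjacent and the boxes chosen so that every entry of those rows lies entirely inside or entirely outside \(A\sqcup B\) --- precisely so that the leading index coefficient equals \(1\), giving \(\coGRel{(t,i,(j,j'))}=\rsym(t)+\sum_{u\rowless t}m_u\rsym(u)\) and hence a terminating algorithm with unit leading coefficients. Without these two arguments carried out, neither exactness at \(\Sym_\la(V)\) in (a) nor the spanning half of (b) is established.
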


Finally in \Cref{sec:copolytab_dual_to_Schur} we show that our constructions indeed yield the Weyl endofunctor, as stated below.
Our proof of the first isomorphism, \(\Delta^\la(V) \iso \copolytab^\la(V)\), is independent of the proof of \Cref{thm:copolytabloids_obey_dual_Garnir}.

\begin{theorem}
\label{thm:constructions_work}
Let \(G\) be a group, let \(\ring\) be a unital commutative ring, and let \(V\) be a representation of \(G\) over \(\ring\) which is free and finitely-generated as an \(\ring\)-module.
Let \(n \in \N\) and let \(\la\) be a partition of \(n\).
Then
\[
    \Delta^\la(V) \iso \copolytab^\la(V) \iso \faktor{\Sym_\la(V)}{\coGar^\la(V)}.
\]
\end{theorem}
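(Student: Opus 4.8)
The plan is to prove the two isomorphisms of \Cref{thm:constructions_work} somewhat independently, reflecting the structure already advertised in the excerpt. The second isomorphism, $\copolytab^\la(V) \iso \faktor{\Sym_\la(V)}{\coGar^\la(V)}$, is essentially immediate from part~(a) of \Cref{thm:copolytabloids_obey_dual_Garnir}: the short exact sequence there exhibits $\copolytab^\la(V)$ as the cokernel of the inclusion $\coGar^\la(V) \hookrightarrow \Sym_\la(V)$, which is exactly the stated quotient. So the real content is the first isomorphism, $\Delta^\la(V) \iso \copolytab^\la(V)$, which by definition of $\Delta^\la$ means $(\nabla^\la(V^\dual))^\dual \iso \copolytab^\la(V)$, or equivalently (dualising, using that $V$ is free and finitely generated so $(V^\dual)^\dual \iso V$) that $\nabla^\la(V^\dual) \iso (\copolytab^\la(V))^\dual$.

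First I would set up a natural pairing between $\Wedge^{\la'}(V)$ and $\Wedge^{\la'}(V^\dual)$ — the standard one coming from the pairing $V \times V^\dual \to \ring$ extended to exterior powers — and restrict it to $\copolytab^\la(V) \subseteq \Wedge^{\la'}(V)$ on one side and to the subspace of $\Wedge^{\la'}(V^\dual)$ spanned by the Garnir-relation elements $\Gar^\la(V^\dual)$ on the other. The key computational step is to show that copolytabloids labelled by tableaux pair to zero against all Garnir relations: that is, $\copolytab^\la(V)$ lies in the annihilator of $\Gar^\la(V^\dual)$ under this pairing. This should follow by a direct expansion, using the explicit definition of the copolytabloid (an alternating sum over the column-stabiliser, or whatever the construction in \Cref{subsec:copolytabloids} turns out to be) against the explicit form of a Garnir element; the antisymmetrisation on the copolytabloid side should kill each term of the Garnir sum. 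Granting this, the pairing descends to a pairing $\copolytab^\la(V) \times \faktor{\Wedge^{\la'}(V^\dual)}{\Gar^\la(V^\dual)} \to \ring$, i.e. a pairing $\copolytab^\la(V) \times \nabla^\la(V^\dual) \to \ring$ and hence a map $\nabla^\la(V^\dual) \to (\copolytab^\la(V))^\dual$ (or the transpose map); naturality in $V$, hence $G$-equivariance, is automatic since everything in sight is built from natural transformations of multilinear algebra.

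To upgrade this to an isomorphism I would use the semistandard bases on both sides: $\nabla^\la(V^\dual) \iso \polytab(V^\dual)$ has $\ring$-basis the polytabloids labelled by semistandard tableaux (the standard result recalled in \Cref{sec:Schur_endofunctors}), and by \Cref{thm:copolytabloids_obey_dual_Garnir}(b) the space $\copolytab^\la(V)$ has $\ring$-basis the copolytabloids labelled by semistandard tableaux. It then suffices to show the pairing matrix between these two bases is unitriangular with respect to the dominance (or column-reading) order on semistandard tableaux — the diagonal entries being units (in fact $\pm 1$ or $1$, by a normalisation computation) and the off-diagonal entries vanishing or being strictly lower. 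This is the same kind of triangularity argument that underlies the classical statement that the polytabloids form a basis, and the main obstacle is precisely getting this triangularity to hold over an arbitrary commutative ring $\ring$ with the correct diagonal normalisation: one must track signs and multiplicities carefully and verify no denominators sneak in. Once the pairing matrix is invertible over $\ring$, the induced map $\nabla^\la(V^\dual) \to (\copolytab^\la(V))^\dual$ is an isomorphism of $\ring$-modules, hence of $G$-representations by naturality; dualising back gives $\Delta^\la(V) = (\nabla^\la(V^\dual))^\dual \iso \copolytab^\la(V)$, completing the proof. As a sanity check and alternative to the triangularity computation, I would note that the freeness and finite generation hypotheses let one reduce the isomorphism claim to the case $\ring = \Z$ (base change) and then, if desired, to a field or even to $\C$ where character-theoretic or dimension arguments could confirm the ranks match; but the triangular-pairing argument is cleaner and ring-independent, so I expect to present that as the main line.
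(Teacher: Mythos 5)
Your treatment of the second isomorphism coincides with the paper's: it is read off from the short exact sequence in \Cref{thm:copolytabloids_obey_dual_Garnir}(a). For the first isomorphism your route is genuinely different. The paper (\Cref{thm:copolytab_dual_to_Schur}) embeds \(\Delta^\la V = (\nabla^\la V^\dual)^\dual\) into \((\Wedge^{\la'}V^\dual)^\dual \iso \Wedge^{\la'}V\) via \(\psi e^\dual\) and computes directly that the spanning elements \(\rt{t^\dual}^\dual\) are sent exactly to the copolytabloids \(\cpt{t}\); this identifies \(\Delta^\la V\) with \(\copolytab^\la(V)\) on the nose, needs no basis information, and is deliberately independent of \Cref{thm:copolytabloids_obey_dual_Garnir}. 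You instead descend the natural pairing to \(\copolytab^\la(V)\times\nabla^\la(V^\dual)\) and argue via a unitriangular Gram matrix in the semistandard bases. That does work: with the standard determinant pairing, \(\langle \cpt{t},\act{s^\dual}\rangle\) is the coefficient of \(\act{s}\) in the column-standard expansion of \(\cpt{t}\), so unitriangularity in the order \(\colless\) is precisely \Cref{lemma:semistandard_polysyms_are_lin_indep}, valid over any \(\ring\) (and consistent with the paper's remark that the two semistandard bases are \emph{not} dual: the matrix is unitriangular, not the identity). Note, however, that your argument genuinely needs \Cref{thm:copolytabloids_obey_dual_Garnir}(b) (the semistandard copolytabloids \emph{span}), i.e.\ the straightening machinery of \Cref{sec:copolytabs_obey_dual_Garnir}, which the paper's proof of this isomorphism avoids; what you gain is an explicit \(G\)-invariant perfect pairing realising the duality.

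Two cautions. Your justification of the descent step --- that the antisymmetrisation ``kills each term of the Garnir sum'' --- is not right as stated: individual terms of \(\GRel{(u^\dual,A,B)}\) do not pair to zero against \(\cpt{t}\); only the signed sum cancels. A clean argument: \(\wedgemap\) is adjoint to column antisymmetrisation \(\Wedge^{\la'}(V^\dual)\to(V^\dual)^{\otimes n}\), and since \(\rsym(t)\) is row-symmetric the value \(\langle \cpt{t},\GRel{(u^\dual,A,B)}\rangle\) depends only on the image of the antisymmetrised Garnir element in \(\Sym^\la(V^\dual)\), which is \(e(\GRel{(u^\dual,A,B)})=0\) because \(\Gar^\la(V^\dual)=\ker e\) (\Cref{sec:Schur_endofunctors}); alternatively run a transposition--involution argument as in the proof of \Cref{prop:row_Garnir_relations_evaluate_to_zero}. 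Second, the closing base-change/dimension sanity check cannot substitute for this: matching ranks over a field does not make a given map invertible over an arbitrary \(\ring\), so the unitriangular pairing must carry the weight, as you anticipate.
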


\subsection{Relationship to Specht modules for the symmetric group}
The well-known construction of the Specht modules due to James \cite{gdjames1978reptheorysymgroups} can be obtained from the Schur endofunctors after an easy modification (restricting to subspaces spanned by elements labelled by tableaux with all entries distinct).
This is the setting in which the term \emph{Garnir relations} (or \emph{Garnir elements}) originates, following work of Garnir \cite{garnir1950theorie}.

The modification to the Schur endofunctors in the case of the Specht modules makes duality much simpler: the dual of a Specht module can be constructed as a submodule of a \emph{twisted Young permutation module} (analogous to an exterior power) in essentially the same way as the Specht module can be constructed as a submodule of a \emph{Young permutation module} (analogous to a symmetric power) (see \cite[\S7.4]{fulton1997youngtableaux}).
Furthermore, as described in \Cref{remark:comparison_with_Specht_module_row_relations} below, the dual Garnir relations introduced in this paper reduce to relations of essentially the same form as the usual Garnir relations. 
Thus the novelty of our dual Garnir relations is for Weyl modules and other general applications of Weyl endofunctors.

\subsection{Existing constructions of Weyl modules and Weyl endofunctors}
\label{remark:comparisons}

Akin, Buchsbaum and Weyman present the Weyl endofunctor (which they call the \emph{coSchur functor}) as a quotient of a divided power (isomorphic to the space of symmetric tensors) and as a submodule of an exterior power \cite[Definition~II.1.3]{akin1982Schurfunctors}: their definition is as the image of a map which is recursively defined by composing multiplications and comultiplications, and a description of the kernel occurs only implicitly (with unspecified integral coefficients) in the sketched proof of \cite[Thoerem~II.3.16]{akin1982Schurfunctors}.
By contrast, our map \(\wedgemap \colon \Sym_\la V \to \copolytab^\la(V)\) has a straightforward definition that is easy to compute, we explicitly describe its kernel as the space of dual Garnir relations (\Cref{def:row_Garnir_relations,prop:Weyl_endofunctor_quotient_of_symmetric_power}), and the non-trivial dualisation of the usual ``straightening algorithm'' is presented in its entirety (\Cref{subsec:straightening}).
We note that the dual Garnir relations are significantly harder to identify than the usual \emph{Garnir relations} or \emph{quadratic relations} in the quotient construct of the dual Weyl module.

Carter and Lusztig, and later Green, describe the Weyl module as the set of tensors satisfying certain conditions (\cite[\S3.2]{carterlusztig1974modularreps}, \cite[(5.2b)]{green2006polynomial}) and give a basis in terms of the action of the Schur algebra on a distinguished element (\cite[\S3.3--3.5]{carterlusztig1974modularreps}, \cite[(5.3b)]{green2006polynomial}).
Their conditions require the tensors to be antisymmetric, and so this construction is isomorphic to a subspace of an exterior power.
Our construction, meanwhile, describes the elements (including a basis) explicitly in terms of the natural basis of the exterior power.
(Carter and Lusztig introduced the name \emph{Weyl module}; for further historial notes, see \cite[p.~10]{green2006polynomial}.)

The treatments of Carter and Lusztig, and of Green, also show that the Weyl module is cyclic, generated by an element that can be interpreted as a highest weight vector.
In an unpublished note, Wildon \cite{Wildon2020WeylModule} takes this as the defining property of the Weyl module, and shows that it has a basis consisting of the elements we call copolytabloids.
Our approach differs by avoiding the use of the group action (and avoiding the assumption that the field is infinite), and hence allowing us to construct the Weyl endofunctor for any group algebra.

Constructions of the Schur and Weyl endofunctors in terms of the letter place algebra were given by Kouwenhoven \cite{kouwenhoven1991SchurandWeyl}. 

This paper contains the novel constructions from Chapter~1 of the author's doctoral thesis \cite{mcdowell2021thesis}.

\section{Tableaux and multilinear algebra}
\label{sec:tableaux}

Throughout, let \(G\) be a group, let \(\ring\) be a unital commutative ring, and let \(V\) be a representation of \(G\) over \(\ring\) which is free and finitely-generated as an \(\ring\)-module.
Let \(\entryset\) be an \(\ring\)-basis for \(V\), and fix a total order on \(\entryset\).
We frequently identify \(\entryset\) with an initial segment of \(\N\) (that is, we view \(\entryset \iso \{1, 2, \ldots, |\entryset|\}\).

\subsection{Partitions and tableaux}

A \emph{partition} is a weakly decreasing finite sequence of positive integers.
The \emph{size} of a partition, denoted \(\abs{\la}\), is the sum of its parts; we say \(\la\) is a partition of \(\abs{\la}\).
If \(\lambda\) is a partition with \(l\) parts, we interpret \(\lambda_i = 0\) for \(i > l\).
The \emph{conjugate} (or \emph{transpose}) of \(\lambda\), denoted \(\lambda'\), is the partition defined by \(\lambda'_i = \abs{\setbuild{j \geq 1}{ \lambda_j \geq i}}\).

The \emph{Young diagram} of \(\lambda\) is the set \(\Y{\lambda} = \setbuild{(i,j)}{1 \leq i \leq \lambda'_1, 1 \leq j \leq \lambda_i}\), which we picture lying in the plane (with the \(x\)-direction downward and the \(y\)-direction rightward).
An element of a Young diagram is called a \emph{box}.
Let \(\brow{i}{\la}\) and \(\bcol{j}{\la}\) denote the sets of boxes in row \(i\) and column \(j\) of \(\Y{\lambda}\) respectively.

A \emph{tableau} of shape \(\lambda\) with entries in \(\entryset\) is a function \(\Y{\lambda} \to \entryset\).
The image of a box \(b \in \Y{\lambda}\) under a tableau \(t\) is the \emph{entry} of \(t\) in \(b\).
We depict a tableau \(t\) by filling the boxes in the Young diagram of \(\lambda\) with their entries in \(t\) (see \Cref{fig:tableau}).
Unless otherwise stated, all tableaux are understood to be of shape \(\la\) with entries in \(\entryset\).

\begin{figure}[bt]
    \centering
\[
t = \ytab{{1}{2}{2},{3}{3}}
\quad\implies\quad
\rt{t} = \yrowtab{{1}{2}{2},{3}{3}}, \quad
    \act{t} = \ycoltab{{1}{2}{2},{3}{3}}
\]
    \caption{A tableau of shape \((3,2)\) with entries in \(\{1,2,3\}\), and its corresponding row tabloid and column tabloid.}
    \label{fig:tableau}
\end{figure}

The group \(G\) acts on the \(R\)-module freely generated by tableaux 
by acting entrywise on each tableau and expanding the result \(R\)-multilinearly.
This behaves like the usual ``diagonal'' action of \(G\) on \(\T{\lambda}{V}\), the \(|\la|\)-fold tensor product over \(\ring\), so we identify the space of tableaux with \(\T{\lambda}{V}\).

If the entries of a tableau strictly increase down the columns, we say it is \emph{column standard};
the set of column standard tableaux of shape \(\la\) with entries in \(\entryset\) is denoted \(\CSYT_\entryset(\la)\).
If the entries of a tableau weakly increase along the rows, we say it is \emph{row semistandard};
the set of row semistandard tableaux of shape \(\la\) with entries in \(\entryset\) is denoted \(\RSSYT_\entryset(\la)\).
If a tableau is both row semistandard and column standard, we say it is \emph{semistandard};
the set of semistandard tableaux of shape \(\lambda\) with entries in \(\entryset\) is denoted \(\SSYT_\entryset(\lambda)\).

\subsection{Place permutation action on tableaux}

Given a set \(X\), we let \(S_X\) denote the group of permutations of \(X\), with permutations written on the right of their arguments.
The group \(S_{\Y{\lambda}}\) of permutations of the Young diagram acts on tableaux on the right by permuting the boxes via
\[
(t \ppa \sigma)(b) = t(b\sigma^{-1})
\]
for \(\sigma \in S_{\Y{\lambda}}\) and \(b \in \Y{\lambda}\).

Define the sets of \emph{row-preserving} and \emph{column-preserving place permutations}, subgroups of \(S_{\Y{\lambda}}\), by
\[
    \RPP(\lambda) = \prod_{i=1}^{\lambda'_1} S_{\brow{i}{\la}}
    \qquad\quad \text{and} \qquad\quad
    \CPP(\lambda) = \prod_{j=1}^{\lambda_1} S_{\bcol{j}{\la}}.
\]
Given a tableau \(t\), let \(\stabof{t} \leq S_{\Y{\la}}\) denote the stabiliser of \(t\), and let \(\rstab{t}  = \stab(t) \cap \RPP(\lambda)\) denote the \emph{row stabiliser} of \(t\).

Given a group \(S\) and subgroups \(I\) and \(J\), we write:
\begin{itemize}[leftmargin=0.6cm,label={}]
    \item
\phantom{\(J\backslash\)}\(S/I\) for the set of left cosets \(\sigma I\) of \(I\) in \(S\);
    \item 
\(\rcosets{S}{I}\)\phantom{\(/J\)} for the set of right cosets \(I \sigma\) of \(I\) in \(S\);
    \item 
\(\dcosets{I}{S}{J}\) for the set of double cosets \(I\sigma J\) of \(I\) on the left and \(J\) on the right in \(S\).
\end{itemize}
Abusing notation, we denote sets of coset representatives in the same way.

\subsection{Symmetric powers, row tabloids and row symmetrisations}
\label{subsec:sym_and_row_tabloids_and_symmetrisations}

The \emph{symmetric power}, denoted \(\Sym^r\), is the quotient of the tensor power
\[
    \Sym^r V = \faktor{V^{\tensor r}}{\spangle{
            w \cdot \sigma - w
        }{
            w \in V^{\otimes r},\, \sigma \in S_r
        }{\ring}
    }.
\]
The dual notion, which we denote \(\Sym_r\) and call the \emph{lower symmetric power}, is the subspace of symmetric tensors:
\begin{align*}
\Sym_r V
    &= \spangle*{
            \longsubsum{\sum}{
                \sigma \ssin \rcosets{S_r}{\stabof{x}}
            }{
                x \ppa \sigma
            }
        }{
            \text{\(x \in V^{\tensor r}\) is a pure tensor}
        }{\ring}
    \subseteq V^{\tensor r}.
\end{align*}

For \(\la\) a partition, we write
\[
    \Sym^\lambda V = \bigotimes_{i=1}^{\lambda'_1} \Sym^{\lambda_i} V
    \qquad\quad \text{and} \qquad\quad
    \Sym_\lambda V = \bigotimes_{i=1}^{\lambda'_1} \Sym_{\lambda_i} V.
\]
Bearing in mind our identification of \(V^{\otimes |\la|}\) with the space of tableaux, we model both symmetric powers with tabular constructions, as follows.

\begin{definition}[Row tabloid]
Let \(t\) be a tableau.
The \emph{row tabloid} of \(t\), denoted \(\rt{t}\), is the equivalence class of \(t\) under row equivalence,
identified with the image of \(t\) under the quotient map \(V^{\otimes |\la|} \to \Sym^\la(V)\).
We draw a row tabloid \(\rt{t}\) by deleting the vertical lines from a drawing of \(t\), as depicted in \Cref{fig:tableau}.
\end{definition}

\begin{definition}[Row symmetrisation]
\label{def:row_symmetrisation}
Let \(t\) be a tableau.
The \emph{row symmetrisation} of \(t\), denoted \(\rsym(t)\), is the element of \(\Sym_\la V \subseteq \T{\lambda}{V}\) given by
\[
    \rsym(t) = \exrsym{t}.
\]
\end{definition}

Observe that \(\Sym^\lambda V\) has \(\ring\)-basis \(\setbuild{\rt{t}}{t \in \RSSYT(\lambda)}\) and
\(\Sym_\lambda V\) has \(\ring\)-basis \(\setbuild{\rsym(t)}{t \in \RSSYT(\lambda)}\).

\subsection{Exterior powers and column tabloids}

The \emph{exterior power}, denoted \(\Wedge^r\), is the quotient of the tensor power 
\[
\Wedge^r V \iso \faktor{V^{\tensor r}}{\spangle{
            w \in V^{\otimes r}
        }{
            \text{\(w \ppa \tau = w\) for some transposition \(\tau \in S_r\)}
        }{\ring}
    }.
\]
This notion is self-dual: the space of antisymmetric tensors is isomorphic to the exterior power.

For \(\la\) a partition, we write
\[
    \Wedge^\lambda V = \bigotimes_{i=1}^{\lambda'_1} \Wedge^{\lambda_i} V
\]
We model the exterior power with a tabular construction, as follows.

\begin{definition}[Column tabloid]
Let \(t\) be a tableau.
The \emph{(alternating) column tabloid} of \(t\), denoted \(\act{t}\), is the signed equivalence class of \(t\) under alternating column equivalence,
identified with the image of \(t\) under the quotient map \(V^{\otimes |\la|} \to \Wedge^{\la'}(V)\).
We draw a column tabloid by deleting the horizontal lines from a drawing of the corresponding tableau, as depicted in \Cref{fig:tableau}.
\end{definition}

Observe that \(\Wedge^{\lambda'} V\) has \(\ring\)-basis \(\setbuild{\act{t}}{t \in \CSYT(\lambda)}\).

\subsection{Duality}
\label{subsec:duality}

Duality in this paper refers to the usual notion for group algebras: for \(V\) a representation of a group \(G\) over a field \(\ring\), the dual is \(V^\dual = \Hom_{\ring}(V, \ring)\), with action of the group given by \((g f)(v) = f(g^{-1} v)\) for all \(v \in V\), \(g \in G\) and \(f \in \Hom_k(V,\ring)\).

A different version of duality, called \emph{contragradient duality} by Green \cite{green2006polynomial} and denoted \(\blank^\circ\), is relevant when \(G\) is a matrix group: the group inverse operation can be replaced with the matrix transpose.
However, it can be shown that in this case there is an isomorphism  \((\nabla^\la(V^\dual))^\dual \iso  (\nabla^\la(V^\circ))^\circ\) \cite[Proposition~3.10(ii)]{mcdowell2021thesis}.

\subsection{Orders on tableaux}
\label{section:orders}

We define two orders similar to the orders defined in \cite[Definitions~3.10, 3.11 and 13.8]{gdjames1978reptheorysymgroups}, but defined on tableaux rather than tabloids, and moreover on tableaux with not-necessarily-distinct entries.
The column ordering appeared also in \cite{mcdowell2021inverseSchur}.
We write \(\ecol{j}{t}{\la}\) for the multiset of entries of \(t\) in column \(j\).

\begin{definition}
\label{def:column_ordering}
The \emph{column ordering}, \(\colless\), is the strict partial order defined as follows on the set of tableaux of shape \(\la\).
Consider tableaux \(t\) and \(u\). 
\begin{itemize}
    \item
If there is equality \(\ecol{j}{t}{\lambda} = \ecol{j}{u}{\lambda}\) (as multisets) for all \(1 \leq j \leq \lambda_1\), then \(t\) and \(u\) are \(\colless\)-incomparable, and we write \(t \colsim u\).
    \item 
Otherwise, let \(m \in \entryset\) be maximal such that there exists \(j\) such that \(m \in \ecol{j}{t}{\lambda} \symdiff \ecol{j}{u}{\lambda}\) (where \(\symdiff\) denotes the multiset symmetric difference).
Let \(j\) be minimal such that \(m \in \ecol{j}{t}{\lambda} \symdiff \ecol{j}{u}{\lambda}\).
If \(m \in \ecol{j}{t}{\lambda} \setminus \ecol{j}{u}{\lambda}\), then \(u \colless t\); conversely
if \(m \in \ecol{j}{u}{\lambda} \setminus \ecol{j}{t}{\lambda}\), then \(t \colless u\).
\end{itemize}
The tableaux \(t\) and \(u\) are \(\colless\)-incomparable if and only if \(t \colsim u\).
We write \(t \colleq u\) to mean \(t \colless u\) or \(t \colsim u\).
\end{definition}

The order is easier to interpret in the case of tableaux with all entries distinct:
to compare two such tableaux, identify the largest entry which does not appear in the same column in both tableaux, and declare the \(\colless\)-greater tableau to be the one for which this element is further left.

We make the identical definitions for the \emph{row ordering} and the symbols \(\rowless\), \(\rowsim\) and \(\rowleq\), replacing all instances of ``column'' with ``row''.
For two tableaux with all entries distinct, the \(\rowless\)-greater tableau is the one in which the largest entry which does not appear in the same row in both tableaux appears further \emph{up} the tableau.

\section{Schur endofunctors}
\label{sec:Schur_endofunctors}

We briefly recall the construction of the Schur endofunctors given in \cite[\S2]{mcdowell2021inverseSchur}, which is equivalent to the constructions of \cite[\S8.1]{fulton1997youngtableaux} and \cite[\S4.6]{green2006polynomial}.

\subsection{Polytabloids}

\begin{definition}
\label{def:polytabloid}
The \emph{polytabloid} corresponding to a tableau \(t\) is the element of \(\tabspace{\lambda}{V}\) given by
\begin{align*}
    \polyt{t} &= \sum_{\sigma \in \CPP(\lambda)} \rt{t \ppa \sigma} \sign{\sigma}.
\end{align*}
Let \(\polytab^\la(V) \subseteq \Sym^\la(V)\) denote the subspace spanned by the polytabloids.
\end{definition}

An immediate consequence of the definition of a polytabloid is that \( \polyt{t \ppa \sigma} = \polyt{t} \sign{\sigma}\) for \(\sigma \in \CPP(\lambda)\),
and that \(\polyt{t} = 0\) if \(t\) has a repeated entry in a column.
It follows that the map \(e \colon \alttabs{\lambda}{V} \to \polytab^{\lambda}{V}\) defined by \(\ring\)-linear extension of
\begin{align*}
    e \colon \act{t} &\mapsto \polyt{t}
\end{align*}
is well-defined and surjective.
It is also \(G\)-equivariant.
We thus see that \(\polytab^\lambda V\) is the quotient of \(\alttabs{\lambda}{V}\) by the kernel of \(e\).

\subsection{Garnir relations}

\begin{definition}[Garnir relations]
\label{def:Garnir_relation}
Let \(t\) be a tableau of shape \(\lambda\) with entries in \(\entryset\).
Let \(1 \leq j < j' \leq \lambda_1\), and let \(A \subseteq \bcol{j}{\lambda}\) and \(B \subseteq \bcol{j'}{\lambda}\) be such that \(\abs{A} + \abs{B} > \lambda'_j\).
Choose \(\mathcal{S}\) a set of coset representatives for \(S_{A} \times S_B\) in \(S_{A \sqcup B}\).
The \emph{Garnir relation} labelled by \((t, A, B)\) is
\[
    \GRel{(t, A, B)} = \sum_{\tau \in \mathcal{S}} \act{t \ppa \tau} \sign{\tau}.
\]
Let \(\GR{\lambda}{V}\) denote 
the \(\ring G\)-submodule of \(\alttabs{\lambda}{V}\) spanned by the Garnir relations.
\end{definition}

It can be shown that \(\GR{\la}{V} = \ker e\) (see e.g.~\cite[\S 2]{mcdowell2021inverseSchur}).
That is, there is a short exact sequence
\begin{center}
\begin{tikzcd}
0 \arrow[r] & \GR{\lambda}{V} \arrow[r] & \alttabs{\lambda}{V} \arrow[r, "e"] & \polytab^{\lambda}{V} \arrow[r] & 0.
\end{tikzcd}
\end{center}

The Schur endofunctor can now be defined as in \Cref{def:endofunctors}.

\section{Construction of the Weyl endofunctors}
\label{sec:Weyl_endofunctors}

\subsection{Copolytabloids}
\label{subsec:copolytabloids}

As in the statement of \Cref{thm:copolytabloids_obey_dual_Garnir}, let \(\wedgemap\) be the restriction of the projection map \(V^{\otimes |\la|} \to \Wedge^{\la'}V\) to the submodule \(\Sym_\la V \subseteq V^{\otimes |\la|}\).

\begin{definition}
\label{def:copolytabloid}
Let \(t\) be a tableau.
The \emph{copolytabloid} of \(t\) is the element of \(\Wedge^{\lambda'} V\) given by
\begin{align*}
\cpt{t} = \wedgemap(\rsym(t)) = \longsubsum{\sum}{
        \tau \in \rcosets{\RPP(\lambda)}{\rstab{t}}
    }{
        \act{t \ppa \tau}.
    }
\end{align*}
Let \(\copolytab^\la(V) \subseteq \Wedge^{\la'} V\) denote the \(\ring\)-subspace spanned by the copolytabloids.
\end{definition}

\begin{remark}
The symbol \(\schwa\) is a \emph{schwa}; it is a rotation of the Roman \(\mathrm{e}\).
\end{remark}

By construction, \(\copolytab^\la(V)\) is the image of \(\wedgemap\), and therefore an \(\ring G\)-module.
Given tableaux \(t\) and \(u\), we make two straightforward observations:
\begin{itemize}[leftmargin=0.6cm]
    \item if \(t,u\) differ only by permuting rows (i.e.~if \(t \rowsim u\)), then \(\cpt{t} = \cpt{u}\);
    \item if \(t\) has a repeated entry in a column, then it is \emph{not} necessarily true that \(\cpt{t} = 0\).
\end{itemize}

We note the following ``unitriangular'' property of copolytabloids.
Recall the column ordering \(\colless\) from \Cref{section:orders}.

\begin{proposition}
\label{lemma:semistandard_polysyms_are_lin_indep}
Let \(t\) be a row semistandard tableau.
Then
\[
    \cpt{t} = \act{t} + \sum_{t \verythinspace \colless \verythinspace u} m_u \act{u}
\]
for some elements \(m_u\) in the subring of \(\ring\) generated by \(1\).
In particular, the set \(\setbuild{\cpt{s}}{s \in \SSYT(\lambda)}\) is \(\ring\)-linearly independent.
\end{proposition}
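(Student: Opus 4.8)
The plan is to compute $\cpt{t}$ directly from its definition as a sum of column tabloids $\act{t \ppa \tau}$, where $\tau$ ranges over a set of coset representatives for $\rstab{t} \backslash \RPP(\lambda)$, and to show that exactly one of these column tabloids equals $\act{t}$ (up to sign) while all the others are column tabloids $\act{u}$ with $t \colless u$, possibly up to sign and possibly vanishing. The key point is to understand how the column ordering interacts with row permutations. First I would note that the identity coset contributes $\act{t}$ itself, with coefficient $1$, accounting for the leading term. Then for any non-identity place permutation $\tau \in \RPP(\lambda)$ that is not in $\rstab{t}$, I must show that the tableau $t \ppa \tau$ satisfies $t \colless t\ppa\tau$ or else $\act{t \ppa \tau}$ cancels or vanishes in a way consistent with the claimed form. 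Since $\tau$ only permutes entries within rows, it cannot change the multiset of entries in any row, but it can change the multiset of entries in a column.

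The crucial observation is this: because $t$ is row semistandard, its entries weakly increase along rows. Applying $\tau \in \RPP(\lambda)$ permutes entries within rows, and I claim this can only move a larger entry from a rightward column to a leftward column relative to $t$ (or leave the column content unchanged). More precisely, I would argue that for row semistandard $t$ and any $\tau \in \RPP(\lambda)$, either $t \ppa \tau \colsim t$ (which happens exactly when $\tau \in \rstab{t}$, since within each row the entries being permuted that are equal give the same column content, and a genuine reshuffling of distinct entries in a row changes column content) or $t \ppa \tau \colgtr t$. To see the latter: take the largest entry $m$ that ends up in a different column, and the leftmost such column $j$; in $t$, because rows are weakly increasing and $\tau$ acts within rows, the entry $m$ in the relevant row of $t$ sits in column $j$ or to its right, and since $m$ is maximal among the "disagreeing" entries, it must be that $m$ appears in column $j$ of $t \ppa \tau$ but not (with the same multiplicity) in column $j$ of $t$, i.e. $m \in \ecol{j}{t\ppa\tau}{\lambda} \setminus \ecol{j}{t}{\lambda}$, which by \Cref{def:column_ordering} gives $t \colless t \ppa \tau$. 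I would present this argument carefully with attention to the multiset bookkeeping — this is where I expect the main technical obstacle to lie, since I must track the symmetric difference of column-content multisets row by row and confirm the direction of the inequality in all cases, including when some of the shuffled entries are equal.

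Having established $\cpt{t} = \act{t} + \sum_{t \colless u} m_u \act{u}$ where each column tabloid $\act{u}$ appearing is of the form $\pm\act{t \ppa \tau}$ (hence the coefficients $m_u$ are integers, i.e. lie in the subring generated by $1$ — combining column tabloids that happen to be equal or to differ by sign, and discarding any that vanish), the linear independence statement follows by a standard unitriangularity argument: for distinct semistandard $s_1, s_2$ of shape $\lambda$ one has $s_1 \colsim s_2$ only if $s_1 = s_2$ (semistandard tableaux are determined by their column contents when those contents are increasing), so the leading column tabloids $\act{s}$ for $s \in \SSYT(\lambda)$ are distinct, and these $\act{s}$ form part of the basis $\setbuild{\act{u}}{u \in \CSYT(\lambda)}$ of $\Wedge^{\lambda'}V$. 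Thus the matrix expressing $\set{\cpt{s} : s \in \SSYT(\lambda)}$ in terms of this basis, after ordering the rows and columns compatibly with $\colless$, is unitriangular over $\Z \cdot 1 \subseteq \ring$, hence the $\cpt{s}$ are $\ring$-linearly independent. I would close by remarking that the unitriangularity is genuinely over the prime subring, so the independence holds regardless of the characteristic of $\ring$.
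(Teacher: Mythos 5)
Your proposal is correct and follows essentially the same route as the paper: the paper's proof likewise rests on the observation that row semistandardness forces \(t \colleq t \ppa \tau\) for every \(\tau \in \RPP(\lambda)\), with \(t \colsim t \ppa \tau\) exactly when \(\tau \in \rstab{t}\), and then deduces linear independence by the same unitriangularity argument with respect to the column order. The only difference is one of detail: the paper asserts this comparison as clear, whereas you sketch (correctly) the multiset bookkeeping behind the claim that the largest displaced entry can only move leftward.
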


\begin{proof}
Since \(t\) is row semistandard, \(t  \colleq t \ppa \tau\) for all \(\tau \in \RPP(\lambda)\), with \(t  \colsim t \ppa \tau\) if and only if \(\tau \in \rstab{t}\).
The claimed expression for \(\cpt{t}\) is then clear.
Linear independence of \(\setbuild{\cpt{s}}{s \in \SSYT(\lambda)}\) follows: the \(\colless\)-least column standard tableau whose column tabloid appears in \(\cpt{s}\) is, being \(s\) itself, distinct for each \(s \in \SSYT(\lambda)\).
\end{proof}

\subsection{Dual Garnir relations}
\label{subsec:dual_Garnir_relations}

\begin{definition}[Dual Garnir relations]
\label{def:row_Garnir_relations}
Let \(t\) be a tableau of shape \(\lambda\) with entries in \(\entryset\).
Let \(1 \leq i < i' \leq \lambda'_1\), and let \(A \subseteq \row_i(\lambda)\) and \(B \subseteq \brow{i'}{\la}\) be such that \(\abs{A} + \abs{B} > \lambda_i\).
Let \(\mathcal{T} = \setbuild{t\ppa \tau}{\tau \in S_{A \sqcup B}}\) be the set (\emph{not} multiset) of tableaux which can be obtained from \(t\) by permuting boxes in \(A \sqcup B\).
Let \(\mathcal{T}/{\rowsim}\) denote a set of equivalence class representatives for tableaux in \(\mathcal{T}\) modulo row equivalence.
The \emph{dual Garnir relation} labelled by \((t, A, B)\) is the element of \(\Sym_\lambda V\) given by
\[
\coGRel{(t, A, B)}
    = \sum_{u \in \mathcal{T}/{\rowsim}}
    \abs*{ \rstab{u} : \rstab{u} \cap (S_{A \sqcup B} \times S_{\Y{\lambda} \setminus A \sqcup B}) }
    \rsym(u).
\]
Let \(\coGR{\lambda}{V}\) denote the \(\ring\)-submodule
of \(\Sym_{\lambda}{V}\) spanned by the dual Garnir relations.
\end{definition}

In \Cref{sec:copolytabs_obey_dual_Garnir}, we show that the dual Garnir relations comprise the kernel of the surjection \(\wedgemap \colon \Sym_\la V \to \copolytab^\la(V)\).
In the remainder of the present section, we make some observations about dual Garnir relations, including giving an example to demonstrate how to compute them.

A dual Garnir relation does not depend on the choice of equivalence class representatives: in the notation of the definition, if \(u, u' \in \mathcal{T}\) are such that \(u \rowsim u'\), then clearly \(\rsym(u) = \rsym(u')\); furthermore there exists \(\sigma \in \RPP(\lambda) \cap (S_{A \sqcup B} \times S_{\Y{\lambda}\setminus A \sqcup B})\) such that \(u\sigma = u'\) and hence \(\rstab{u} = \sigma\rstab{u'}\sigma^{-1}\), and so the index of \(\rstab{u} \cap (S_{A \sqcup B} \times S_{\Y{\lambda} \setminus A \sqcup B})\) in \(\rstab{u}\) is unchanged if \(u\) is replaced with \(u'\).
The representatives must indeed be chosen from \(\mathcal{T}\), however: if \(s \not\in \mathcal{T}\), then \(u \rowsim s\) does not imply that the relevant indices are equal.

It is not immediately clear that the \(\ring\)-submodule \(\coGR{\lambda}{V}\) is an \(\ring G\)-submodule.
We do not show this fact directly, but it follows from showing \(\coGR{\lambda}{V} = \ker \wedgemap\).

A dual Garnir relation can also be expressed as a sum over double cosets.
This expression, given in \Cref{lemma:double_coset_sum_interpretation_of_row_Garnir_relations} below, is helpful for proving that the dual Garnir relations lie in the kernel of \(\wedgemap\) (\Cref{prop:row_Garnir_relations_evaluate_to_zero}), but may be more cumbersome for explicit calculations.

\begin{proposition}
\label{lemma:double_coset_sum_interpretation_of_row_Garnir_relations}
Let \(\coGRel{(t,A,B)}\) be any dual Garnir relation.
Let \[\mathcal{S} = \dcosets{(\stab(t) \cap S_{A\sqcup B})}{S_{A \sqcup B}}{(S_A \times S_B)}\] be a set of double coset representatives for \(\stab(t) \cap S_{A \sqcup B}\) on the left and \(S_{A} \times S_B\) on the right in \(S_{A \sqcup B}\).
Then
\[
\coGRel{(t, A, B)}
    = \sum_{\tau \in \mathcal{S}}
    \abs*{
        \rstab{t \ppa \tau}
        : 
        \rstab{t \ppa \tau} \cap (S_{A \sqcup B} \times S_{\Y{\lambda} \setminus A \sqcup B})
    }
    \rsym(t \ppa \tau).
\]
\end{proposition}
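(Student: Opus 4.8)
The plan is to re-index the sum in \Cref{def:row_Garnir_relations} by producing a natural bijection between the double-coset set $\mathcal{S}$ and the set $\mathcal{T}/{\rowsim}$ of row-equivalence classes appearing in that definition, carrying the double coset of $\tau\in S_{A\sqcup B}$ to the class of $t\ppa\tau$. Since the coefficient attached to a class in \Cref{def:row_Garnir_relations} is $\abs{\rstab{u}:\rstab{u}\cap(S_{A\sqcup B}\times S_{\Y{\lambda}\setminus A\sqcup B})}$ for \emph{any} representative $u$, and this equals the coefficient in the statement when $u=t\ppa\tau$, it will suffice to check that $\set{t\ppa\tau:\tau\in\mathcal{S}}$ is a legitimate set of representatives for $\mathcal{T}/{\rowsim}$; the displayed formula is then \Cref{def:row_Garnir_relations} evaluated at this choice, which is permissible because (as already observed) the relation is independent of the choice of representatives.

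First I would invoke the orbit--stabiliser correspondence: the map $\tau\mapsto t\ppa\tau$ identifies $\mathcal{T}$ with the coset space $\rcosets{S_{A\sqcup B}}{(\stab(t)\cap S_{A\sqcup B})}$, because $t\ppa\tau=t\ppa\tau'$ precisely when $\tau(\tau')^{-1}\in\stab(t)$, and $\tau(\tau')^{-1}\in S_{A\sqcup B}$ automatically. Next I would show that under this identification row equivalence within $\mathcal{T}$ corresponds exactly to right multiplication by the subgroup $S_A\times S_B$. One inclusion is immediate: $S_A\times S_B\leq\RPP(\lambda)$, so for $h\in S_A\times S_B$ and $g\in\stab(t)\cap S_{A\sqcup B}$ we have $t\ppa(g\tau h)=(t\ppa\tau)\ppa h\rowsim t\ppa\tau$. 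Granting the reverse inclusion, rewriting $t\ppa\tau\ppa h=t\ppa\tau'$ as $\tau h(\tau')^{-1}\in\stab(t)\cap S_{A\sqcup B}$ shows that $t\ppa\tau\rowsim t\ppa\tau'$ if and only if $\tau$ and $\tau'$ lie in a common double coset of $\stab(t)\cap S_{A\sqcup B}$ on the left and $S_A\times S_B$ on the right; hence $\tau\mapsto[t\ppa\tau]_{\rowsim}$ is a well-defined bijection $\mathcal{S}\to\mathcal{T}/{\rowsim}$.

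The main obstacle is the reverse inclusion just used: if $t\ppa\tau\rowsim t\ppa\tau'$ then there is $h\in S_A\times S_B$ with $t\ppa\tau\ppa h=t\ppa\tau'$. The crucial observation is that every tableau in $\mathcal{T}$ agrees with $t$ on every box outside $A\sqcup B$ (permutations in $S_{A\sqcup B}$ fix such boxes), so $t\ppa\tau$ and $t\ppa\tau'$ agree with each other there. Starting from a witness $\rho\in\RPP(\lambda)$ with $(t\ppa\tau)\ppa\rho=t\ppa\tau'$, I would modify $\rho$ row by row: on every row other than $i$ and $i'$ the two tableaux already coincide, so the factor of $\rho$ there may be deleted; in row $i$ the two tableaux coincide off $A$ and, being row-equivalent, carry the same multiset of entries on $A$, so the factor of $\rho$ in row $i$ may be replaced by a suitable element of $S_A$ without altering the outcome; symmetrically in row $i'$ with $S_B$. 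The product of the two replacements is the required $h$. This is in essence the computation behind the independence-of-representatives remark following \Cref{def:row_Garnir_relations}, and I expect it to be the only step needing real care --- everything else is coset bookkeeping.

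To finish, the bijection $\mathcal{S}\to\mathcal{T}/{\rowsim}$ shows that $\set{t\ppa\tau:\tau\in\mathcal{S}}$ meets each row-equivalence class of $\mathcal{T}$ exactly once, so it is an admissible choice of ``$\mathcal{T}/{\rowsim}$'' in \Cref{def:row_Garnir_relations}. Evaluating that definition at this choice (taking $u=t\ppa\tau$, so that $\rsym(u)=\rsym(t\ppa\tau)$ and the coefficient is $\abs{\rstab{t\ppa\tau}:\rstab{t\ppa\tau}\cap(S_{A\sqcup B}\times S_{\Y{\lambda}\setminus A\sqcup B})}$) produces exactly the displayed double-coset expression, and independence of the choice of representatives identifies it with $\coGRel{(t,A,B)}$, yielding the claimed identity.
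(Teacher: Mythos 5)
Your proposal is correct and matches the paper's own argument: both identify the sum over \(\mathcal{T}/{\rowsim}\) with the sum over \(\dcosets{(\stab(t) \cap S_{A\sqcup B})}{S_{A \sqcup B}}{(S_A \times S_B)}\) by observing that equality in \(\mathcal{T}\) corresponds to left multiplication by \(\stab(t)\cap S_{A\sqcup B}\) and row equivalence to right multiplication by \(S_A\times S_B\), then appeal to independence of the choice of representatives. Your row-by-row modification of the witnessing row permutation simply spells out the step the paper compresses into the remark that \(\RPP(\lambda)\cap S_{A\sqcup B}=S_A\times S_B\), and it is carried out correctly.
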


\begin{proof}
Both the definition of \(\coGRel{(t,A,B)}\) and the expression in the statement above can be viewed as sums over \(S_{A \sqcup B}\) modulo certain equivalence relations: in the definition, modulo equality in \(\mathcal{T}\) and row equivalence; in the claim, modulo left multiplication by \(\stab(t) \cap S_{A \sqcup B}\) and right multiplication by \(S_A \times S_B\).
Left multiplication by \(\stab(t) \cap S_{A \sqcup B}\) precisely corresponds to equality in \(\mathcal{T}\): given \(\tau, \tau' \in S_{A \sqcup B}\), we have \(t \ppa \tau = t \ppa \tau'\) if and only if \(\tau^{-1}\tau' \in \stab(t) \cap S_{A \sqcup B}\).
Right multiplication by \(S_A \times S_B\) precisely corresponds to row equivalence in \(\mathcal{T}\): given \(u, u' \in \mathcal{T}\), we have \(u \rowsim u'\) if and only if there exists \(\sigma \in S_A \times S_B\) such that \(u \ppa \sigma = u'\) (using \(\RPP(\lambda) \cap S_{A \sqcup B} = S_A \times S_B\)).
\end{proof}

We illustrate the definition of a dual Garnir relation with an example.
This example also demonstrates why our definition is the right one to make.

\begin{example}
\label{eg:row_Garnir_relations}
Suppose \(\lambda = (2, 2)\), \(\entryset = \{1,2\}\), and 
\(
    t = \ytabsmall{11,22}.
\)
Let \(A = \set{ (1,1), (1,2) }\) and \(B = \set{ (2,1) }\) (these sets of boxes are indicated in the margin).

\marginnote{
\begin{center}
\begin{tikzpicture}[x=0.7 cm, y=-0.7 cm]
\fill[paleazure] (0+\fb,0+\fb) rectangle (0+1-\fb,1-\fb);
\fill[paleazure] (1+\fb,0+\fb) rectangle (1+1-\fb,1-\fb);

\fill[palegold] (0+\fb,1+\fb) rectangle (1-\fb,2-\fb);

\drawthreesides{0}{0}           \drawtwosides{1}{0}
    \node[darkazure] at ($(0,0)+(\ABshift)$) {\ABstyle{A}};     \node[darkazure] at ($(1,0)+(\ABshift)$) {\ABstyle{A}};
\drawfoursides{0}{1};           \drawrthreesides{1}{1}
    \node[vdarkgold] at ($(0,1)+(\ABshift)$) {\ABstyle{B}};
\end{tikzpicture}
\end{center}
}[-1.4cm]%

There are three distinct tableaux obtained by the action of \(S_{A \sqcup B}\) on \(t\):
\[
    \ytab{11,22}\,,\qquad  \ytab{21,12}\,, \qquad \ytab{12,12}\,.
\]
The latter two are row equivalent, so a set of representatives is \(\mathcal{T}/{\rowsim} = \set*{ \ytabsmall{11,22}\,,\ytabsmall{21,12}}\).
The second tableau has trivial row stabiliser.
The first tableau, \(t\) itself, has row stabiliser \(S_{\brow{1}{\lambda}} \times S_{\brow{2}{\lambda}}\), of size \(4\), whose intersection with \(S_{A \sqcup B} \times S_{\Y{\lambda} \setminus A \sqcup B}\) is \(S_{\brow{1}{\lambda}}\), of size \(2\).
Thus the dual Garnir relation is
\begin{align*}
\coGRel{(t,A,B)}
    &= 2 \rsym\bigl(\,\ytabsmall{11,22}\,\bigr) + \rsym\bigl(\,\ytabsmall{21,12}\,\bigr) \\
    &= 2 \,\ytabsmall{11,22} + \ytabsmall{21,12} + \ytabsmall{12,12} + \ytabsmall{21,21} + \ytabsmall{12,21}.
\end{align*}

Alternatively we can use the expression from \Cref{lemma:double_coset_sum_interpretation_of_row_Garnir_relations}.
The group \(S_{A \sqcup B}\) is isomorphic as an abstract group to \(S_3\), and is generated by the transpositions \(\tau = \bigl( (1,1)\ (2,1) \bigr)\) and \(\omega = \bigl( (1,1)\ (1,2) \bigr)\) (these permutations are depicted in the margin).
\marginnote{
\begin{center}
\begin{tikzpicture}[x=0.7 cm, y=-0.7 cm]
\drawthreesides{0}{0}           \drawtwosides{1}{0}
\drawfoursides{0}{1}           \drawrthreesides{1}{1}
\draw [semithick, <->] (0.5,-0.2) arc (190:350:0.5) node[above, midway]{\(\omega\)};
\draw [semithick, <->] (-0.2,1.5) arc (100:260:0.5) node[left, midway]{\(\tau\)};
\end{tikzpicture}
\end{center}
}[-2.6cm]%
Note that \(\stab(t) \cap S_{A \sqcup B} = S_A \times S_B\), and \(\omega\) is the unique nontrivial element of this subgroup.
Thus there are only two double cosets: \(\set{ \id, \omega }\) and \(\set{ \tau, \tau\omega, \omega\tau, \omega\tau\omega}\).
A choice of double coset representatives is
\[
    \dcosets{\stab(t) \cap S_{A \sqcup B}}{S_{A \sqcup B}}{S_A \times S_B}
    =
    \set{ \id, \tau }.
\]
The set \(\set{t, t \ppa \tau}\) obtained from the action of these double coset representatives is precisely the above choice of representatives for \(\mathcal{T}/{\rowsim}\), and thus we obtain \(\coGRel{(t,A,B)}\) as above.

Observe that the image of \(\coGRel{(t,A,B)}\) under the map \(\wedgemap \colon \T{\lambda}{V} \onto \Wedge^{\lambda'}{V}\) is
\begin{align*}
\wedgemap(\coGRel{(t,A,B)})
    &= 2 \cpt{t} + \cpt{t \ppa \tau} \\
    &= 2 \,\ycoltabsmall{11,22} + \ycoltabsmall{21,12} + \ycoltabsmall{12,21} \\
    &= 0,
\end{align*}
as the upcoming \Cref{prop:row_Garnir_relations_evaluate_to_zero} claims.
This allows us to express the copolytabloid \(\schwa\bigl(\, \ytabsmall{21,12}\,\bigr)\) in terms of semistandard copolytabloids: \(\schwa\bigl(\, \ytabsmall{21,12}\,\bigr) = - 2\schwa\bigl(\, \ytabsmall{11,22}\,\bigr) \). This prcoess is described in general in the upcoming \Cref{lemma:standard_expression_for_row_symmetrisation}.

We now use this example to demonstrate why our definition of the dual Garnir relations is the correct definition to make.

The obvious candidate for a simpler definition is in direct analogy with the usual Garnir relations:
define \(\coGRel{(t,A,B)}^\ast\) as a sum over left coset representatives of \(S_A \times S_B\) in \(S_{A \sqcup B}\), without any coefficients appearing in the sum.
A choice of coset representatives is \(\lcosets{S_{A \sqcup B}}{S_A \times S_B} = \set{ \id, \tau, \omega\tau }\).
In our example we have
\begin{align*}
\coGRel{(t,A,B)}^\ast
    &= \rsym \Bigl(\, \ytabsmall{11,22} \,\Bigr) + 2\rsym \Bigl(\, \ytabsmall{21,12} \,\Bigr) \\
    &= \ytabsmall{11,22} + 2 \,\ytabsmall{21,12} + 2 \,\ytabsmall{12,12} + 2 \,\ytabsmall{21,21} + 2 \,\ytabsmall{12,21}
\intertext{whose image under \(\wedgemap\) is}
\wedgemap(\coGRel{(t,A,B)}^\ast)
    &= \schwa \Bigl(\, \ytabsmall{11,22} \,\Bigr) + 2\schwa \Bigl(\, \ytabsmall{21,12} \,\Bigr) \\
    &= \ycoltabsmall{11,22} + 2\,\ycoltabsmall{21,12} + 2\,\ycoltabsmall{12,21} \\
    &= -3 \,\ycoltabsmall{11,22}
\end{align*}
which is nonzero (in characteristics other than \(3\)).
Similarly it can be shown that summing over the entire group \(S_{A \sqcup B}\) also fails to yield an element of the kernel (in this example we would obtain twice the quantity above).

An alternative definition that does yield elements of the kernel is to, as above, sum over left coset representatives of \(S_A \times S_B\) in \(S_{A \sqcup B}\) (without any additional coefficients), but replace the row symmetrisation with a sum over the entire group of row preserving permutations.
Define
\[
\coGRel{(t,A,B)}^{\ast\ast}
    =
    \sum_{\tau \ssin \lcosets{S_{A \sqcup B}}{S_A \times S_B}} \,
    \longsubsum{\sum}{\sigma \in \RPP(\lambda)}{ t \ppa \tau\sigma. }
\]
It can be shown that these elements lie in the kernel of \(\wedgemap\); this proof is much simpler than that of \Cref{prop:row_Garnir_relations_evaluate_to_zero} because the sums over the row permutations do not depend on the tableaux to which they are being applied.
However, in general these elements have scalar factors, and hence cannot be used in a straightening algorithm (as described in \Cref{lemma:standard_expression_for_row_symmetrisation}) to express a copolytabloid in terms of semistandard copolytabloids.
In our example, we have
\begin{align*}
\coGRel{(t,A,B)}^{\ast\ast}
    &= \sum_{\sigma \in \RPP(\lambda)} \ytabsmall{11,22} \ppa \sigma
        \ \ +\ \ 2\!\!\!\sum_{\sigma \in \RPP(\lambda)} \ytabsmall{21,12} \ppa \sigma \\
    &= 4\,\ytabsmall{11,22} + 2 \,\ytabsmall{21,12} + 2 \,\ytabsmall{12,12} + 2 \,\ytabsmall{21,21} + 2 \,\ytabsmall{12,21} \\
    &= 2 \coGRel{(t,A,B)}.
\end{align*}
\end{example}

\begin{remark}
\label{remark:comparison_with_Specht_module_row_relations}
If \(t\) has all entries distinct, then a dual Garnir relation labelled by \(t\) is simpler to write down: the row stabiliser of any place permutation of \(t\) is trivial, so all the coefficients in the expression are \(1\), and the row symmetrisations are sums over the entire group of row preserving permutations.
Additionally, the proof of the upcoming \Cref{prop:row_Garnir_relations_evaluate_to_zero} is much easier (as noted in \Cref{eg:row_Garnir_relations}).
In this case, when \(G\) is a symmetric group and \(V\) the natural permutation representation, our dual Garnir relations become relations for the dual Specht module that are well-known \cite[Exercise~14, p.~101]{fulton1997youngtableaux}.
Similar relations hold working in the cellular basis of the dual Specht module for the Hecke algebra \cite[\S 3.2]{mathas1999IwahoriHecke}.
\end{remark}

As is the case for the usual Garnir relations, it suffices to consider a certain subset of the dual Garnir relations:
those in which the chosen rows are adjacent, and boxes are taken from the right of the upper row and the left of the lower row.
We call these relations \emph{dual snake relations} (following the use of the name ``snake relation'' in \cite{boeck2021plethysms}, given due to the shape formed by the boxes considered).
Note we permit the chosen boxes to overlap in multiple columns;
this is due to our straightening algorithm (\Cref{lemma:standard_expression_for_row_symmetrisation}) requiring the chosen boxes to contain all or none of the instances of an entry in a row.

\begin{definition}
\label{def:row_snake_relations}
A dual Garnir relation \(\coGRel{(t,A,B)}\) is called a \emph{dual snake relation} when, in the notation of \Cref{def:row_Garnir_relations}, \(i' = i+1\) and there exist \(j \leq j'\) such that \(A = \setbuild{(i,r)}{j \leq r \leq \lambda_i}\) and \(B = \setbuild{(i',r)}{1 \leq r \leq j'}\).
In this case, we may also label the dual Garnir relation by \((t, i, (j,j'))\).
\end{definition}

\section{Copolytabloids obey the dual Garnir relations}
\label{sec:copolytabs_obey_dual_Garnir}

The goal of this section is to show that the kernel of the surjection \(\wedgemap \colon \Sym_\la \to \copolytab^\la(V)\) is the space of dual Garnir relations \(\coGar^\la(V)\), and in the process identify a basis for \(\copolytab^\la(V)\).
This will establish \Cref{thm:copolytabloids_obey_dual_Garnir}.
These facts are not necessary to deduce \(\Delta^\la V \iso \copolytab^\la(V)\); the reader interested only in this isomorphism may wish to skip ahead to \Cref{sec:copolytab_dual_to_Schur}.

\subsection{Dual Garnir relations lie in the kernel}

Our strategy is to rewrite the double coset expression for \(\coGRel{(t,A,B)}\) to remove the dependence between the sums.
To this end, we first record some expressions for sets of coset representatives.

\newcommand{\crampedtimes}{{\times}}
\newcommand{\mybigbackslash}{\,\big\backslash\,}

\begin{lemma}
\label{lemma:coset_rep_expressions}
Let \(\Gamma\) be any group, and let \(I\), \(J\) and \(L\) be subgroups of \(\Gamma\).
Denote compontentwise multiplication of sets by concatenation.
The following equalities hold, interpreted as statements about choices of coset representatives.
\begin{enumerate}[(i)]
    \item\label{eq:double_coset_expansion}
\(
I \mybigbackslash \Gamma = 
\bigsqcup_{g \ssin I \semibigbackslash \Gamma \semibig/ J} \bigsqcup_{h \ssin  (I^g \cap J) \semibigbackslash J} \set{gh}.
\)
    \item\label{eq:intermediate_row_perms}
\(
I
    \mybigbackslash
    \Gamma
=
\bigl(
    I
    \mybigbackslash
    J
\bigr)
\bigl(
    J
    \mybigbackslash
    \Gamma
\bigr)
\) if \(I \leq J\).
    \item\label{eq:expand_product_of_commuting_subgroups}
\(
(L \!\cap\! (I \crampedtimes J))
    \!\mybigbackslash\!
    (I \crampedtimes J)
=
\bigl(
    L \!\cap\! I
    \mybigbackslash
    I
\bigr)
\bigl(
    L \!\cap\! J
    \mybigbackslash
    J
\bigr)
\) if \(I\), \(J\) commute and are disjoint.
\end{enumerate}
\end{lemma}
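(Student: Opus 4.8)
The plan is to prove all three parts by the same method: read each identity as the assertion that a certain explicitly-described set of products is a transversal for the indicated right cosets, and verify the two defining properties of a transversal — that the set meets every coset, and that it meets each coset exactly once. The only ingredients are that the right coset $Hx$ is determined by $x$ modulo right multiplication by $H$, that $\Gamma$ is the disjoint union of its $(I,J)$-double cosets, and — for parts (i) and (iii) — the uniqueness of a suitable factorisation of each group element.

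Parts (i) and (ii) are the classical ``transitivity of cosets'' facts, which I would dispatch directly. For (i), fix a double-coset representative $g$; the double coset $IgJ$ is the disjoint union of the right $I$-cosets it contains, and $I(gh)=I(gh')$ holds iff $g(hh'^{-1})g^{-1}\in I$, i.e.\ iff $hh'^{-1}\in(g^{-1}Ig)\cap J=I^g\cap J$ (the condition $hh'^{-1}\in J$ being automatic), so those cosets are indexed bijectively by $(I^g\cap J)\backslash J$; summing over a transversal of double cosets, and using that distinct double cosets are disjoint, gives the claimed disjoint decomposition of $I\backslash\Gamma$. For (ii) with $I\le J\le\Gamma$: given transversals $\{g\}$ of $J\backslash\Gamma$ and $\{j\}$ of $I\backslash J$, the products $jg$ meet every coset (choose $g$ with $x\in Jg$, then $j$ with $xg^{-1}\in Ij$), and meet each coset only once, since $I(jg)=I(j'g')$ gives $(jg)(j'g')^{-1}\in I\le J$, whence $gg'^{-1}\in J$ so $g=g'$, and then $Ij=Ij'$ so $j=j'$.

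Part (iii) carries the real content and is the step I expect to demand the most care. Since $I$ and $J$ commute and intersect trivially, the internal product $I\times J$ has the unique-factorisation property: every element is $ab$ for unique $a\in I$, $b\in J$. Given transversals $\{h_1\}$ of $(L\cap I)\backslash I$ and $\{h_2\}$ of $(L\cap J)\backslash J$, write $a=c_1h_1$, $b=c_2h_2$ with $c_1\in L\cap I$, $c_2\in L\cap J$; commutativity gives $ab=(c_1c_2)(h_1h_2)$ with $c_1c_2\in(L\cap I)(L\cap J)\subseteq L\cap(I\times J)$, so the products $h_1h_2$ meet every $(L\cap(I\times J))$-coset — and this part holds for an arbitrary subgroup $L$. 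The remaining ``meets each coset only once'' claim, equivalently the count $[I\times J:L\cap(I\times J)]=[I:L\cap I]\,[J:L\cap J]$, is \emph{not} automatic: it requires $L\cap(I\times J)=(L\cap I)(L\cap J)$, after which one rewrites $(h_1h_2)(h_1'h_2')^{-1}=(h_1h_1'^{-1})(h_2h_2'^{-1})$ by commutativity and, comparing $I$- and $J$-components through unique factorisation, concludes $h_1=h_1'$ and $h_2=h_2'$. I would therefore either add this factorisation as an explicit hypothesis, or — matching the way the lemma is used — note that it holds whenever $L$ is a Young-type subgroup $\prod_v S_{F_v}$ and $I$, $J$ are the pointwise stabilisers of the two blocks $B_1,B_2$ of a partition of the underlying set, since then $L\cap(I\times J)=\prod_v\bigl(S_{F_v\cap B_1}\times S_{F_v\cap B_2}\bigr)$ manifestly equals $(L\cap I)(L\cap J)$.
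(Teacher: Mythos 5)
Your argument is correct, and since the paper's own proof of this lemma is just the remark that the statements are routine bookkeeping, your verifications of (i) and (ii) are exactly that bookkeeping carried out (with the same convention $I^g=g^{-1}Ig$ the paper uses). The genuinely valuable part of your proposal is the caveat you raise about (iii): as literally stated, with $L$ an arbitrary subgroup, (iii) is false. For instance, take $\Gamma=\langle a\rangle\times\langle b\rangle$ a Klein four-group, $I=\langle a\rangle$, $J=\langle b\rangle$, $L=\langle ab\rangle$: then $L\cap I=L\cap J=\{1\}$, so the right-hand side is the four-element set $\{1,a,b,ab\}$, whereas $L\cap(I\times J)=L$ has index $2$ in $I\times J$, so each coset is met twice. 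Your analysis isolates exactly the missing hypothesis: the product set always meets every coset (your surjectivity computation works for any $L$), and it is a transversal if and only if $L\cap(I\times J)=(L\cap I)(L\cap J)$, equivalently $[I\times J:L\cap(I\times J)]=[I:L\cap I]\,[J:L\cap J]$. This hypothesis does hold where the lemma is invoked in the proof of \Cref{prop:row_Garnir_relations_evaluate_to_zero}, where $I=\RPP(\lambda)\cap S_C$ and $J=\RPP(\lambda)\cap S_Z$ with $C\sqcup Z=\Y{\la}$, and $L=\rstab{t\ppa\tau}$ is a row stabiliser: an element of $L\cap(S_C\times S_Z)$ factors uniquely as its $C$-part times its $Z$-part, and each factor is again row preserving and fixes the tableau, which is precisely your Young-subgroup observation. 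So your proof is complete once that extra hypothesis (or the restriction to such $L$) is made explicit, and in making it explicit you repair a small imprecision in the statement of part (iii) that the paper's application silently relies on.
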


\begin{proof}
All the statements are routine exercises in bookkeeping.
\end{proof}

\newcommand{\rowperms}[1]{\mathrm{R}_{#1}}

\begin{proposition}
\label{prop:row_Garnir_relations_evaluate_to_zero}
If \(\coGRel{(t,A,B)}\) is any dual Garnir relation,
then \(\wedgemap(\coGRel{(t,A,B)}) = 0\).
\end{proposition}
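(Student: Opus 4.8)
The plan is to apply $\wedgemap$ to the double-coset expression for $\coGRel{(t,A,B)}$ from \Cref{lemma:double_coset_sum_interpretation_of_row_Garnir_relations}, to collapse the resulting iterated sum of column tabloids into a single sum over a coset space, and to observe that this sum vanishes via a column transposition. To set up, recall that $\wedgemap(\rsym(u)) = \cpt{u} = \sum_{\pi \ssin \rcosets{\RPP(\lambda)}{\rstab{u}}} \act{u \ppa \pi}$ for every tableau $u$. Writing $\Sigma = S_{A \sqcup B} \times S_{\Y{\lambda} \setminus A \sqcup B}$ as in \Cref{thm:copolytabloids_obey_dual_Garnir}, the index coefficient in a dual Garnir relation can be folded into this coset sum: applying the identity $I \backslash \Gamma = (I \backslash J)(J \backslash \Gamma)$ of \Cref{lemma:coset_rep_expressions} to the chain $\rstab{u} \cap \Sigma \leq \rstab{u} \leq \RPP(\lambda)$, and using that $\act{u \ppa \pi}$ depends only on the coset $\rstab{u}\pi$, gives
\[
    \abs*{\rstab{u} : \rstab{u} \cap \Sigma} \, \cpt{u} = \sum_{\pi \ssin \rcosets{\RPP(\lambda)}{(\rstab{u} \cap \Sigma)}} \act{u \ppa \pi}.
\]
Feeding this into \Cref{lemma:double_coset_sum_interpretation_of_row_Garnir_relations} term by term, and noting $\rstab{t \ppa \tau} \cap \Sigma = \stab(t \ppa \tau) \cap P$ with $P = \RPP(\lambda) \cap \Sigma$, produces
\[
    \wedgemap(\coGRel{(t,A,B)}) = \sum_{\tau \ssin \mathcal{S}} \, \sum_{\pi \ssin \rcosets{\RPP(\lambda)}{(\stab(t \ppa \tau) \cap P)}} \act{t \ppa \tau \ppa \pi}, \quad \mathcal{S} = \dcosets{(\stab(t) \cap S_{A \sqcup B})}{S_{A \sqcup B}}{(S_A \times S_B)}.
\]

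The heart of the argument — and the step I expect to be the main obstacle — is to fuse these two summations into one. Because $\abs{A} + \abs{B} > \lambda_i$ forces $A$ and $B$ both nonempty, the subgroup $H \leq S_{\Y{\lambda}}$ generated by $S_{A \sqcup B}$ and $\RPP(\lambda)$ is all of $S_{\brow{i}{\lambda} \cup \brow{i'}{\lambda}} \times \prod_{k \neq i, i'} S_{\brow{k}{\lambda}}$. Using that $S_A \times S_B = \RPP(\lambda) \cap S_{A \sqcup B}$ is contained in $P = (S_A \times S_{\brow{i}{\lambda} \setminus A}) \times (S_B \times S_{\brow{i'}{\lambda} \setminus B}) \times \prod_{k \neq i,i'} S_{\brow{k}{\lambda}}$, I would invoke all three parts of \Cref{lemma:coset_rep_expressions} — part~(i) to break the double cosets $\mathcal{S}$ into ordinary cosets, and parts~(ii) and~(iii) to splice the $\tau$-dependent inner coset spaces $\rcosets{\RPP(\lambda)}{(\stab(t \ppa \tau) \cap P)}$ together along the direct-product decomposition of $P$ — to rewrite the iterated sum as the single sum
\[
    \wedgemap(\coGRel{(t,A,B)}) = \sum_{\gamma \ssin \rcosets{H}{(\stab(t) \cap H)}} \act{t \ppa \gamma},
\]
the sum of the column tabloids of all distinct tableaux obtained from $t$ by arbitrarily permuting entries within rows $i$ and $i'$ together and within each remaining row. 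The bookkeeping here is intricate, and one must be mindful that various $\act{t \ppa \tau \ppa \pi}$ occurring in the iterated sum only coincide after passage to $\Wedge^{\lambda'} V$ (they differ by column permutations and signs), so the identity above is one in $\Wedge^{\lambda'} V$, not at tensor level.

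Granting this, the conclusion is quick. Since $1 \leq i < i' \leq \lambda'_1$, both $(i,1)$ and $(i',1)$ are boxes of $\Y{\lambda}$; let $c \in S_{\Y{\lambda}}$ transpose them. Then $c$ is a column-preserving place permutation of sign $-1$, and $c \in H$ (it is a conjugate, by an element of $\RPP(\lambda)$, of a transposition swapping a box of $A$ with a box of $B$, hence lies in $\langle S_{A \sqcup B}, \RPP(\lambda) \rangle$). Right multiplication by $c$ is thus a well-defined involution on $\rcosets{H}{(\stab(t) \cap H)}$. A coset $\gamma$ is fixed exactly when $c \in \stab(t \ppa \gamma)$, i.e.\ when $t \ppa \gamma$ has equal entries in the two boxes $(i,1)$, $(i',1)$ of its first column; then $\act{t \ppa \gamma} = 0$, since a column tabloid with a repeated entry in a column vanishes. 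For $\gamma$ not fixed, $t \ppa \gamma \neq t \ppa \gamma \ppa c$ and $\act{t \ppa \gamma \ppa c} = \sgn(c)\,\act{t \ppa \gamma} = -\act{t \ppa \gamma}$, so $\gamma$ and $\gamma c$ contribute cancelling terms. Hence $\wedgemap(\coGRel{(t,A,B)}) = 0$.
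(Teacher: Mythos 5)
Your opening reduction is sound and matches the paper's first step: folding the index \(\abs{\rstab{u} : \rstab{u}\cap\Sigma}\) into a sum over \(\rcosets{\RPP(\lambda)}{(\rstab{u}\cap\Sigma)}\) is exactly the paper's use of \Cref{lemma:coset_rep_expressions}(ii), and your closing involution argument (cancel via a column transposition, with fixed cosets giving repeated column entries) is the same cancellation mechanism the paper uses at the end. The problem is the step you yourself flag as the heart: the ``fusion'' of the iterated sum \(\sum_{\tau}\sum_{\pi}\act{t\ppa\tau\ppa\pi}\) into the single sum \(\sum_{\gamma\ssin\rcosets{H}{(\stab(t)\cap H)}}\act{t\ppa\gamma}\) over the big group \(H=\langle S_{A\sqcup B},\RPP(\lambda)\rangle\). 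This cannot be obtained from \Cref{lemma:coset_rep_expressions}, because identities about coset representatives only ever re-index terms, i.e.\ they would produce an equality of \emph{multisets of tableaux}, and these multisets genuinely differ. Already in \Cref{eg:row_Garnir_relations} (\(\lambda=(2,2)\), \(t=\ytabsmall{11,22}\), \(A\) the first row, \(B=\{(2,1)\}\)) the iterated sum contributes the tableau \(t\) twice and never contributes \(\ytabsmall{22,11}\), whereas your fused sum contains each of the six rearrangements of the entries exactly once, including \(\ytabsmall{22,11}\). So the two sides agree only after passing to \(\Wedge^{\lambda'}V\), as you note — but then the claim is no longer bookkeeping, and no combination of parts (i)--(iii) can deliver it.

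Worse, the fusion identity is logically equivalent to the proposition you are trying to prove: since your fused sum is annihilated by the column-transposition involution (that part of your argument is correct), asserting that \(\wedgemap(\coGRel{(t,A,B)})\) equals the fused sum is precisely asserting \(\wedgemap(\coGRel{(t,A,B)})=0\). In other words, the entire content of \Cref{prop:row_Garnir_relations_evaluate_to_zero} has been deferred into the unproven splice. The paper avoids this by stopping short of a full fusion: it uses parts (ii) and (iii) to rewrite the inner sums so that the \(Z\)-part and the rightmost \(\RPP(\lambda)\)-sum become independent of \(\tau\), merges only the \(S_{A\sqcup B}\)-direction via part (i), and thereby reduces to showing \(\sum_{\tau\ssin\rcosets{S_{A\sqcup B}}{(\stab(t)\cap S_{A\sqcup B})}}\act{t\ppa\tau\sigma}=0\) for each fixed \(\sigma\in\RPP(\lambda)\); there the pigeonhole principle supplies a transposition \(\omega\in S_{A\sqcup B}\) whose conjugate \(\omega^\sigma\) is column-preserving, and your involution argument applies legitimately. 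To repair your proof you would need either to follow that factored route, or to prove directly that the difference between \(\coGRel{(t,A,B)}\) and the \(H\)-symmetrisation of \(t\) lies in \(\ker\wedgemap\) — which is not addressed in the proposal.
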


\newcommand{\AandB}{C}

\renewcommand{\mybigbackslash}{\big\backslash}

\begin{proof}
For convenience, we introduce some abbreviations: \(H = \stabof{t}\), \(C = A \sqcup B\) and \(Z = \Y{\lambda}\setminus A \sqcup B\), and for \(D \subseteq \Y{\lambda}\) we write \(\rowperms{D} = \RPP(\lambda) \cap S_D\). 
We then have, for \(\tau \in S_{\Y{\lambda}}\), that \(\stab(t \ppa \tau) = H^{\tau} = \tau^{-1}H\tau\), that \(\rstab{t \ppa \tau} = H^\tau \cap \rowperms{\Y{\lambda}}\), that \(\rstab{t \ppa \tau} \cap (S_{\AandB} \times S_{\Y{\lambda} \setminus \AandB}) = H^\tau \cap (\rowperms{\AandB} \times \rowperms{Z})\), and that \(\rowperms{\AandB} = S_A \times S_B\).

For each \(\tau \in S_{\Y{\lambda}}\), we use \Cref{lemma:coset_rep_expressions}\Cref{eq:intermediate_row_perms} with
\(\Gamma = \rowperms{\Y{\lambda}}\),
\(I = H^\tau \cap (\rowperms{\AandB}\times\rowperms{Z})\) and
\(J = H^\tau \cap \rowperms{\Y{\la}}\)
to find that
\begin{align*}
\longsubsum{\sum}{\pi \ssin
    H^\tau \cap (\rowperms{\AandB} \crampedtimes \rowperms{Z}) \semibigbackslash
    \rowperms{\Y{\lambda}}
}{
    \act{t \ppa \tau\pi}
}
&=
\sum_{\phi \ssin
    H^\tau \cap (\rowperms{\AandB} \crampedtimes \rowperms{Z}) \semibigbackslash H^\tau \cap \rowperms{\Y{\lambda}}
}\,
\longsubsum{\sum}{\sigma \ssin
    H^\tau \cap \rowperms{\Y{\lambda}}
    \semibigbackslash \rowperms{\Y{\lambda}}
}{
    \act{t \ppa \tau\phi\sigma}
} \\
&= \abs*{ H^\tau \cap \rowperms{\Y{\lambda}} :  H^\tau \cap \rowperms{C} \crampedtimes \rowperms{Z} }
\,
\longsubsum{\sum}{\sigma \ssin
    H^\tau \cap \rowperms{\Y{\lambda}}
    \semibigbackslash
    \rowperms{\Y{\lambda}}
}{
    \act{t \ppa \tau\sigma}
} \\
&= \abs*{ H^\tau \cap \rowperms{\Y{\lambda}} :  H^\tau \cap \rowperms{C} \crampedtimes \rowperms{Z} }
    \wedgemap(\rsym(t \ppa \tau))
\end{align*}
where we have used that elements of \(H^\tau \cap \rowperms{\Y{\lambda}}\) fix \(t \ppa \tau\).
Note the index is precisely the index occurring in the definition of the dual Garnir relations.
Therefore the element we are required to show is zero is
\begin{align*}
\wedgemap(\coGRel{(t,A,B)})
&=\!
\sum_{\tau \ssin
    H \cap S_{C} \semibig\backslash S_{\AandB} \semibig/ \rowperms{\AandB}
}\,
\longsubsum{\sum}{\pi \ssin
    H^\tau \cap (\rowperms{\AandB} \crampedtimes \rowperms{Z}) \semibigbackslash
    \rowperms{\Y{\lambda}}
}{
    \act{t \ppa \tau\pi}
} \\
&=\!
\sum_{\tau \ssin
    H \cap S_{C} \semibig\backslash S_{\AandB} \semibig/ \rowperms{\AandB}
}\,
\sum_{\phi \ssin
    H^\tau \cap (\rowperms{\AandB} \crampedtimes \rowperms{Z}) \semibigbackslash \rowperms{\AandB} \times \rowperms{Z}
}\,
\longsubsum{\sum}{\sigma \ssin
    \rowperms{\AandB} \times \rowperms{Z}
    \semibigbackslash
    \rowperms{\Y{\lambda}}
}{
    \act{t \ppa \tau\phi\sigma}
} \\
&=
\sum_{\tau \ssin
    H \cap S_{C} \semibig\backslash S_{\AandB} \semibig/ \rowperms{\AandB}
}\,
\sum_{\chi \ssin
    H^\tau \cap \rowperms{\AandB} \semibigbackslash \rowperms{\AandB}
}\,
\sum_{\psi \ssin
    H^\tau \cap \rowperms{Z} \semibigbackslash \rowperms{Z}
}\,
\longsubsum{\sum}{\sigma \ssin
    \rowperms{\AandB} \times \rowperms{Z}
    \semibigbackslash
    \rowperms{\Y{\lambda}}
}{
    \act{t \ppa \tau\chi\psi\sigma}
}
\end{align*}
where the last two equalities hold by parts \Cref{eq:intermediate_row_perms} and \Cref{eq:expand_product_of_commuting_subgroups} of \Cref{lemma:coset_rep_expressions} respectively,
each with \(\Gamma = \rowperms{\Y{\la}}\), and
for \Cref{eq:intermediate_row_perms} with \(I = H^\tau \cap (\rowperms{\AandB} \times \rowperms{Z})\) and \(J = \rowperms{\AandB} \times \rowperms{Z}\), and
for \Cref{eq:expand_product_of_commuting_subgroups} with \(I = \rowperms{\AandB}\), \(J = \rowperms{Z}\) and \(L = H^\tau \cap \rowperms{\Y{\la}}\).

Now we combine terms using \Cref{lemma:coset_rep_expressions}\Cref{eq:double_coset_expansion} with \(\Gamma = S_C\), \(I = H \cap S_C\) and \(J = \rowperms{\AandB}\).
We see that the final line above becomes
\begin{align*}
\wedgemap(\coGRel{(t,A,B)})
&=
\sum_{\tau \ssin
    H \cap S_{C} \semibig\backslash S_{\AandB}
}\,
\sum_{\psi \ssin
    H^{\tau} \cap \rowperms{Z} \semibigbackslash \rowperms{Z}
}\,
\longsubsum{\sum}{\sigma \ssin
    \rowperms{\AandB} \cap \rowperms{Z}
    \semibigbackslash
    \rowperms{\Y{\lambda}}
}{
    \act{t \ppa \tau\psi\sigma}.
}
\end{align*}
Notice that, because the boxes of \(Z\) are fixed by \(\tau \in S_{A \sqcup B}\), we have that \(H^{\tau} \cap \rowperms{Z} = H \cap \rowperms{Z}\) is independent of \(\tau\).
The rightmost sum above also has indexing set independent of \(\tau\), and both of these indexing sets are subsets of \(\rowperms{\Y{\lambda}}\), so it suffices to show that \(\sum_{\tau \ssin
    H \cap S_{C} \semibig\backslash S_{\AandB}} \act{t \ppa \tau\sigma} = 0\)
for all \(\sigma \in \rowperms{\Y{\lambda}}\).

Fix \(\sigma \in \rowperms{\Y{\lambda}}\).
Recall from the definition of a dual Garnir relation that \(A \subseteq \brow{i}{\lambda}\) and \(B \subseteq \brow{i'}{\lambda}\) for some \(1 \leq i < i' \leq \lambda_1\), and that \(\abs{A} + \abs{B} > \lambda_i\).
Thus by the pigeonhole principle there exists a column containing both a box in \(A\) and a box in \(B\). 
Moreover the same claim holds if we act by \(\sigma\) first; that is, there exist \(a \in A\), \(b \in B\) and \(1 \leq j \leq \lambda_1\) such that \(a\sigma = (i,j)\) and \(b\sigma = (i',j)\).
Let \(\omega = ( a \ b ) \in S_{A \sqcup B}\), and note that \(\omega^\sigma  = \sigma^{-1} \omega \sigma = (a\sigma\ b\sigma) \in \CPP(\lambda)\).

Define an action of \(\omega\) on the set of cosets \(\rcosets{S_{A \sqcup B}}{H \cap S_{A \sqcup B}}\) by right multiplication.
If \(\tau \in \rcosets{S_{A \sqcup B}}{H \cap S_{A \sqcup B}}\) is in an orbit of size \(1\), then \(t \ppa \tau = t \ppa \tau\omega\) and hence \(t \ppa \tau\sigma = t \ppa \tau\sigma\omega^\sigma\).
But then \(t \ppa \tau\sigma\) has the same entries in \(a\sigma\) and \(b\sigma\); since these are in the same column, this implies \(\act{t \ppa \tau\sigma} = 0\).
If \(\set{\tau, \tau\omega}\) is an orbit of size \(2\), then since \(\omega^\sigma \in \CPP(\lambda)\) we have \(\act{t \ppa \tau\omega\sigma} = \act{t \ppa \tau\sigma\omega^\sigma} = -\act{t \ppa \tau\sigma}\), and so the contributions to the sum of these orbits cancel out.
Thus the entire sum is zero, as required.
\end{proof}

\subsection{A straightening algorithm}
\label{subsec:straightening}

Recall we defined the \emph{dual snake relations} as a special set of dual Garnir relations in \Cref{def:row_snake_relations}.

\newcommand{\lowerj}{j}
\newcommand{\upperj}{j'}
\newcommand{\midj}{j_0}

\begin{lemma}
\label{lemma:ordered_expression_for_row_snake_relation}
Let \(t\) be a row semistandard tableau, and suppose \(i\) and \(\lowerj \leq \upperj\) are such that there exists \(\lowerj \leq \midj \leq \upperj\) such that:
\begin{itemize}
\item
\(t(i,\midj) \geq t(i{+}1,\midj)\);
\item
\(t(i, \lowerj) = t(i,\midj)\) and \(t(i+1, \upperj) = t(i+1, \midj)\);
\item
\(t(i,\lowerj{-}1) < t(i,\lowerj)\) (or \(\lowerj = 1\)) and \(t(i{+}1,\upperj) < t(i{+}1,\upperj{+}1)\) (or \(\upperj = \lambda_{i+1}\)).
\end{itemize}
Then
\[
    \coGRel{(t,i,(\lowerj,\upperj))} = \rsym(t) + \sum_{\substack{u \rowless t}} m_u \rsym(u)
\]
for some elements \(m_u\) in the subring of \(\ring\) generated by \(1\).
\end{lemma}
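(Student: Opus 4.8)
The plan is to expand the dual snake relation directly from \Cref{def:row_Garnir_relations,def:row_snake_relations}: by definition $\coGRel{(t,i,(j,j'))}$ is the dual Garnir relation $\coGRel{(t,A,B)}$ for $A = \setbuild{(i,r)}{j \leq r \leq \lambda_i}$ and $B = \setbuild{(i{+}1,r)}{1 \leq r \leq j'}$, and this equals $\sum_{u \ssin \mathcal{T}/{\rowsim}} c_u \rsym(u)$ where $\mathcal{T} = \setbuild{t \ppa \tau}{\tau \in S_{A \sqcup B}}$ and $c_u = \abs*{\rstab{u} : \rstab{u} \cap (S_{A \sqcup B} \times S_{\Y{\lambda} \setminus A \sqcup B})}$. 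Since this value is independent of the choice of class representatives, I will take $t$ itself as the representative of its own class (legitimate since $t = t \ppa \id \in \mathcal{T}$); then every other representative $u$ lies in $\mathcal{T}$ and satisfies $u \not\rowsim t$. Each $c_u$ is the index of a subgroup, hence a positive integer, hence an element of the subring of $\ring$ generated by $1$; so it remains to prove (i)~that $c_t = 1$, and (ii)~that $u \rowless t$ for every $u \in \mathcal{T}$ with $u \not\rowsim t$.

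For (i) I will show $\rstab{t} \subseteq S_{A \sqcup B} \times S_{\Y{\lambda} \setminus A \sqcup B}$, which forces $c_t = 1$. A row-preserving permutation fixing $t$ permutes only boxes carrying equal $t$-entries, and — as $t$ is row semistandard — within any row these occupy a run of consecutive columns. The hypotheses $t(i,j{-}1) < t(i,j)$ (or $j = 1$) and $t(i{+}1,j') < t(i{+}1,j'{+}1)$ (or $j' = \lambda_{i+1}$) then ensure that no run of equal entries in row $i$ straddles the left edge of $A$, and none in row $i{+}1$ straddles the right edge of $B$; consequently no such permutation moves a box of $A \sqcup B$ out of $A \sqcup B$.

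For (ii), the remaining three hypotheses say exactly that in $t$ every entry in $A$ is greater than or equal to every entry in $B$: row semistandardness gives that the entries of $t$ in $A$ are all $\geq t(i,j) = t(i,j_0)$ and those in $B$ are all $\leq t(i{+}1,j') = t(i{+}1,j_0)$, while $t(i,j_0) \geq t(i{+}1,j_0)$ is assumed. Write $P$ for the multiset of entries occupying the boxes of $A \sqcup B$ (the same for every tableau in $\mathcal{T}$) and $P_A(v)$ for the sub-multiset occupying $A$; the observation just made means $P_A(t)$ consists of the $\abs{A}$ largest entries of $P$. Fix $u \in \mathcal{T}$ with $u \not\rowsim t$. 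Since $A \sqcup B$ lies only in rows $i$ and $i{+}1$ while all boxes outside it agree in $t$ and $u$, the symmetric difference of the row-$i$ entry multisets of $t$ and of $u$ equals $P_A(t) \symdiff P_A(u)$ — which also equals that of the row-$(i{+}1)$ multisets, and is nonempty. Hence the largest entry $m$ occurring in any row symmetric difference of $t$ and $u$ lies in $P_A(t) \symdiff P_A(u)$, and the least row in which such an occurrence happens is row $i$; by the definition of $\rowless$ it therefore suffices to show that $m$ has strictly greater multiplicity in $P_A(t)$ than in $P_A(u)$. This follows from a short count: the number of elements exceeding $m$ is the same in $P_A(t)$ and $P_A(u)$ (entries above $m$ have equal multiplicity), and splitting into the cases $m > \min P_A(t)$ and $m \leq \min P_A(t)$ shows the multiplicity of $m$ in $P_A(t)$ is at least — hence, since $m$ lies in the symmetric difference, strictly greater than — that in $P_A(u)$. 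Therefore $u \rowless t$.

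Combining (i) and (ii) gives $\coGRel{(t,i,(j,j'))} = \rsym(t) + \sum_{u \rowless t} c_u \rsym(u)$ with every $c_u$ a positive integer, as required. I expect the main obstacle to be the multiset bookkeeping in part (ii): one must check carefully that the row symmetric differences of $t$ and $u$ are confined to rows $i$ and $i{+}1$, so that $m$ is genuinely the globally largest discrepant entry and first appears in row $i$, and one must carry out cleanly — in both of its cases — the count showing that $P_A(t)$, being the $\abs{A}$ largest entries of $P$, carries the greater multiplicity of $m$.
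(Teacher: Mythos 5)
Your proposal is correct and follows essentially the same route as the paper: the coefficient of \(\rsym(t)\) is shown to be \(1\) by proving \(\rstab{t} \subseteq S_{A \sqcup B} \times S_{\Y{\lambda}\setminus A \sqcup B}\) from the boundary hypotheses, and every other class representative is shown to be \(\rowless\)-smaller than \(t\) because the entries in \(A\) dominate those in \(B\). Your part (ii) merely spells out, via the multiset count, the step the paper asserts in one line (that \(t \ppa \tau \rowleq t\) with equivalence only for \(\tau \in (\stab(t)\cap S_{A\sqcup B})(S_A \times S_B)\)).
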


\tikzset{
  symbol/.style={
    draw=none,
    every to/.append style={
      edge node={node [sloped, allow upside down, auto=false]{$#1$}}}
  }
}
\newcommand{\smalldots}{\raisebox{0.95pt}{\scalebox{0.75}{$\cdots$}}}

\newcommand{\upleq}{\rotatebox{90}{\(\leq\)}}

\begin{proof}
The diagram below illustrates the hypotheses satisfied by the sets \(A = \setbuild{(i,r)}{\lowerj \leq r \leq \lambda_i}\) and \(B = \setbuild{(i+1,r)}{1 \leq r \leq \upperj}\) defining the Garnir relation (these sets are shaded in the diagram).

\begin{center}
\begin{tikzpicture}[x=\xlen cm, y=-\ylen cm]

\fill[paleazure] (0+\fb,1+\fb) rectangle (1+0-\fb,2-\fb);
\fill[paleazure] (1+\fb,1+\fb) rectangle (1+1-\fb,2-\fb);
\fill[paleazure] (2+\fb,1+\fb) rectangle (1+2-\fb,2-\fb);
\fill[paleazure] (3+\fb,1+\fb) rectangle (1+3-\fb,2-\fb);
\fill[paleazure] (4+\fb,1+\fb) rectangle (1+4-\fb,2-\fb);
\fill[paleazure] (5+\fb,1+\fb) rectangle (1+5-\fb,2-\fb);
\fill[paleazure] (6+\fb,1+\fb) rectangle (1+6-\fb,2-\fb);

\fill[paleazure] (3+\fb,0+\fb) rectangle (1+3-\fb,1-\fb);
\fill[paleazure] (4+\fb,0+\fb) rectangle (1+4-\fb,1-\fb);
\fill[paleazure] (5+\fb,0+\fb) rectangle (1+5-\fb,1-\fb);
\fill[paleazure] (6+\fb,0+\fb) rectangle (1+6-\fb,1-\fb);
\fill[paleazure] (7+\fb,0+\fb) rectangle (1+7-\fb,1-\fb);
\fill[paleazure] (8+\fb,0+\fb) rectangle (1+8-\fb,1-\fb);
\fill[paleazure] (9+\fb,0+\fb) rectangle (1+9-\fb,1-\fb);
\fill[paleazure] (10+\fb,0+\fb) rectangle (1+10-\fb,1-\fb);

    \node at (0.5,-0.4) {\(\sstyle 1\)};
    \node at (3.5,-0.4) {\(\sstyle \lowerj\)};
    \node at (4.5,-0.4) {\(\sstyle \vphantom{j}\smash{\midj}\)};
    \node at (6.5,-0.4) {\(\sstyle \upperj\)};
    \node at (10.5,-0.4) {\(\sstyle \lambda_i\)};
    \node at (-0.6,0.5) {\(\sstyle i\)};
    \node at (-0.6,1.5) {\(\sstyle i+1\)};
    
\drawthreesides{0}{0}
    \drawtwosides{1}{0}\drawtwosides{2}{0}\drawtwosides{3}{0}\drawtwosides{4}{0}\drawtwosides{5}{0}
    \drawtwosides{6}{0}\drawtwosides{7}{0}\drawtwosides{8}{0}\drawtwosides{9}{0}\drawtwosides{10}{0}
\drawthreesides{0}{1}
    \drawtwosides{1}{1}\drawtwosides{2}{1}\drawtwosides{3}{1}\drawtwosides{4}{1}\drawtwosides{5}{1}
    \drawtwosides{6}{1}\drawtwosides{7}{1}\drawtwosides{8}{1}
\draw (0,2)--(9,2);
\draw (9,1)--(11,1);

\node at (1,1.5) {\(\leq\)};
    \node at (2,1.5) {\(\leq\)};
    \node at (3,1.5) {\(\leq\)};
    \node at (4,1.5) {\(\leq\)};
    \node at (5,1.5) {\(=\)};
    \node at (6,1.5) {\(=\)};
    \node at (7,1.5) {\(<\)};
\node at (3,0.5) {\(<\)};
    \node at (4,0.5) {\(=\)};
    \node at (5,0.5) {\(\leq\)};
    \node at (6,0.5) {\(\leq\)};
    \node at (7,0.5) {\(\leq\)};
    \node at (8,0.5) {\(\leq\)};
    \node at (9,0.5) {\(\leq\)};
    \node at (10,0.5) {\(\leq\)};
\node at (4.5,1) {\(\upleq\)};
\end{tikzpicture}
\end{center}

In particular all the boxes in \(B\) contain entries less than or equal to all the entries in boxes in \(A\).
Therefore for any \(\tau \in S_{A \sqcup B}\),
we have \(t \ppa \tau \rowleq t\), and furthermore row equivalence \(t \ppa \tau \rowsim t\) holds if and only if \(\tau \in (\stab(t)\cap S_{A \sqcup B})(S_{A} \times S_B)\).
Thus the row symmetrisation \(\rsym(t)\) appears precisely once in \(\coGRel{(t,i,(j,j'))}\), with coefficient
\(\abs*{
    \rstab{t}
:
    \rstab{t} \cap (S_{A \sqcup B} \times S_{\Y{\lambda} \setminus A \sqcup B})
}\).
But by the assumptions, as displayed above, every entry in row \(i\) of \(t\) occurs either only in \(A\) or only in \(\brow{i}{\lambda} \setminus A\), and likewise every entry in row \(i+1\) of \(t\) occurs either only in \(B\) or only in \(\brow{i+1}{\lambda} \setminus B\).
Then  \(\rstab{t} \subseteq S_{A \sqcup B} \times S_{\Y{\lambda} \setminus A \sqcup B}\), so this coefficient is \(1\).
\end{proof}

\begin{lemma}
\label{lemma:standard_expression_for_row_symmetrisation}
Let \(t\) be a tableau.
Then there exists some \(\ring\)-linear combination \(\gamma\) of dual snake relations (with coefficients in the subring of \(\ring\) generated by \(1\)) such that
\[
    \rsym(t) + \gamma = \sum_{s \in \SSYT(\lambda)} a_s \rsym(s)
\]
for some elements \(a_s\) in the subring of \(\ring\) generated by \(1\) (which may be all zero).
\end{lemma}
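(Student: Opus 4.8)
The plan is to straighten $\rsym(t)$ by repeatedly applying the dual snake relations supplied by \Cref{lemma:ordered_expression_for_row_snake_relation}, descending through the row ordering $\rowless$ of \Cref{section:orders}. First I would reduce to the case that $t$ is row semistandard: since $t \rowsim \rowsorted{t}$, we have $\rsym(t) = \rsym(\rowsorted{t})$, and $\rowsorted{t}$ is row semistandard, so it suffices to prove the statement for row semistandard tableaux. The set of row semistandard tableaux of shape $\lambda$ with entries in $\entryset$ is finite, and $\rowless$ — being defined purely in terms of the multisets of entries of a tableau in each row, hence invariant under row equivalence — restricts to a strict partial order on this set, and so is well-founded there. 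I would therefore argue by (strong) induction on $t$ with respect to $\rowless$.

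For the base case and the easy case of the inductive step: if the row semistandard tableau $t$ is also column standard then $t \in \SSYT(\lambda)$ and there is nothing to prove (take $\gamma = 0$). Otherwise $t$ has a column violation, i.e.\ there are $i$ and $j_0$ with $(i+1,j_0) \in \Y{\lambda}$ and $t(i,j_0) \geq t(i+1,j_0)$; in particular $i+1 \leq \lambda'_1$. Let $j$ be the least column index with $t(i,j) = t(i,j_0)$ and $j'$ the greatest column index with $t(i+1,j') = t(i+1,j_0)$. Because $t$ is row semistandard the equal entries in each row occupy a contiguous block of columns, so $j \leq j_0 \leq j'$ and all three hypotheses of \Cref{lemma:ordered_expression_for_row_snake_relation} hold: the boundary inequalities $t(i,j{-}1) < t(i,j)$ (or $j=1$) and $t(i{+}1,j') < t(i{+}1,j'{+}1)$ (or $j' = \lambda_{i+1}$) are exactly the minimality of $j$ and maximality of $j'$ combined with row semistandardness, and the label is legitimate in the sense of \Cref{def:row_snake_relations} since $j \leq j'$ (and the size condition $\abs{A}+\abs{B} = (\lambda_i - j + 1) + j' \geq \lambda_i + 1 > \lambda_i$ is automatic). \Cref{lemma:ordered_expression_for_row_snake_relation} then gives $\coGRel{(t,i,(j,j'))} = \rsym(t) + \sum_{u \rowless t} m_u \rsym(u)$ with the $m_u$ in the subring of $\ring$ generated by $1$, so $\rsym(t)$, modulo the dual snake relation $\coGRel{(t,i,(j,j'))}$, is an $\ring$-combination of row symmetrisations of strictly $\rowless$-smaller tableaux. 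Replacing each $u$ by $\rowsorted{u}$ (unchanged value, still $\rowless t$) and applying the induction hypothesis to each $\rsym(\rowsorted{u})$, I substitute and collect: the resulting $\gamma$ is a single $\ring$-combination of dual snake relations (namely $-\coGRel{(t,i,(j,j'))}$ together with the combinations coming from the induction hypotheses), and since sums and products of elements of the subring generated by $1$ stay in that subring, all coefficients are of the required form. Finally, for an arbitrary tableau $t$, I apply the row semistandard case to $\rowsorted{t}$.

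I expect the main obstacle — conceptually slight, but the place that demands care — to be the construction of the dual snake relation in the inductive step: one must choose $j$ and $j'$ so that the blocks $A$ and $B$ capture \emph{all} occurrences of the entries $t(i,j_0)$ and $t(i+1,j_0)$ in their respective rows, which is precisely what forces $\rstab{t} \subseteq S_{A \sqcup B} \times S_{\Y{\lambda} \setminus A \sqcup B}$ and hence makes $\rsym(t)$ appear with coefficient exactly $1$ in $\coGRel{(t,i,(j,j'))}$, as required to invoke \Cref{lemma:ordered_expression_for_row_snake_relation}; one must also check $j \leq j'$ so that $(t,i,(j,j'))$ genuinely labels a dual snake relation. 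The remaining work — rearranging the substituted induction hypotheses and verifying that every coefficient lies in the subring of $\ring$ generated by $1$ — is routine bookkeeping.
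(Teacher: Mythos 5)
Your proposal is correct and follows essentially the same route as the paper's proof: reduce without loss of generality to a row semistandard tableau, locate a column violation, choose \(j\) and \(j'\) so that the blocks capture all occurrences of the offending entries, invoke \Cref{lemma:ordered_expression_for_row_snake_relation}, and induct along the row ordering \(\rowless\). Your write-up merely makes explicit some points the paper leaves implicit (well-foundedness of \(\rowless\) on the finite set of row semistandard tableaux, replacing the resulting tableaux \(u\) by their row-sorted representatives, and the checks that \(j \leq j'\) and \(\abs{A}+\abs{B} > \lambda_i\)), all of which are verified correctly.
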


\begin{proof}
Without loss of generality, we may assume \(t\) is row semistandard.
If \(t\) is also column standard, we are done.
Otherwise, choose a box \((i,j_0)\) such that \(t(i+1,j_0) \leq t(i,j_0)\).
Then pick \(j \leq j_0 \leq j'\) such that \(t(i, j{-}1) < t(i,j) = t(i,j_0)\) (or \(j=1\)) and \(t(i{+}1, j_0) = t(i{+}1, j') < t(i{+}1, j'{+}1)\) (or \(j'=\lambda_{i+1}\)).
By \Cref{lemma:ordered_expression_for_row_snake_relation} we have that \(\coGRel{(t,i,(j,j'))} = \rsym(t) + \sum_{u \rowless t} m_u \rsym(u)\) for some elements \(m_u\) in the subring of \(\ring\) generated by \(1\).
Then \(\rsym(t) - \coGRel{(t,i,j)}\) is a linear combination of row symmetrisations of tableaux which precede \(t\) in the row ordering.
The lemma follows by induction.
\end{proof}

\begin{proposition}
The module \(\copolytab^\la(V)\) has \(\ring\)-basis \(\setbuild{\cpt{s}}{s \in \SSYT(\lambda)}\).
\end{proposition}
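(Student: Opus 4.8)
The plan is to combine the linear independence already established in \Cref{lemma:semistandard_polysyms_are_lin_indep} with a spanning statement obtained by pushing the straightening algorithm through the map \(\wedgemap\). Linear independence requires no further work: \Cref{lemma:semistandard_polysyms_are_lin_indep} already shows that \(\setbuild{\cpt{s}}{s \in \SSYT(\lambda)}\) is \(\ring\)-linearly independent in \(\Wedge^{\lambda'}V\).

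For the spanning claim, recall that \(\copolytab^\la(V) = \im \wedgemap\) is spanned by the copolytabloids \(\cpt{t} = \wedgemap(\rsym(t))\) as \(t\) ranges over all tableaux (equivalently, over \(\RSSYT(\lambda)\), since \(\setbuild{\rsym(t)}{t \in \RSSYT(\lambda)}\) is an \(\ring\)-basis of \(\Sym_\la V\)). Fix a tableau \(t\). By \Cref{lemma:standard_expression_for_row_symmetrisation} there is an \(\ring\)-linear combination \(\gamma\) of dual snake relations such that \(\rsym(t) + \gamma = \sum_{s \in \SSYT(\lambda)} a_s \rsym(s)\). Each dual snake relation is, by \Cref{def:row_snake_relations}, a particular dual Garnir relation in the sense of \Cref{def:row_Garnir_relations}, hence lies in \(\ker \wedgemap\) by \Cref{prop:row_Garnir_relations_evaluate_to_zero}; so applying the \(\ring\)-linear map \(\wedgemap\) to the displayed identity gives \(\cpt{t} = \sum_{s \in \SSYT(\lambda)} a_s \cpt{s}\). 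Therefore the semistandard copolytabloids span \(\copolytab^\la(V)\), and together with linear independence they form an \(\ring\)-basis.

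I do not expect any genuine obstacle: this proposition is essentially the formal assembly of three results that have already been proved — the unitriangularity of copolytabloids (giving linear independence), the dual straightening algorithm (giving a semistandard expression for any row symmetrisation modulo dual snake relations), and the vanishing of dual Garnir relations under \(\wedgemap\). The only point needing a moment's care is the observation that dual snake relations are instances of \Cref{def:row_Garnir_relations}, so that \Cref{prop:row_Garnir_relations_evaluate_to_zero} applies to the correction term \(\gamma\); this is immediate from the definitions.
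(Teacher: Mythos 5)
Your proof is correct and follows essentially the same route as the paper: linear independence from \Cref{lemma:semistandard_polysyms_are_lin_indep}, and spanning by applying \(\wedgemap\) to the straightening expression of \Cref{lemma:standard_expression_for_row_symmetrisation}, with \Cref{prop:row_Garnir_relations_evaluate_to_zero} killing the correction term \(\gamma\). Your extra remark that dual snake relations are instances of \Cref{def:row_Garnir_relations} is exactly the (implicit) justification the paper uses, so nothing is missing.
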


\begin{proof}
Linear independence was shown in \Cref{lemma:semistandard_polysyms_are_lin_indep}.
Applying \(\wedgemap\) to the expression given in \Cref{lemma:standard_expression_for_row_symmetrisation}, and using \Cref{prop:row_Garnir_relations_evaluate_to_zero} to find \(\wedgemap(\gamma)=0\), shows that any copolytabloid can be written as an \(\ring\)-linear combination of semistandard copolytabloids.
\end{proof}

\begin{remark}
That the semistandard copolytabloids span \(\copolytab^\la(V)\) can also be deduced without using the results of this section, and instead using \Cref{thm:copolytab_dual_to_Schur} below and the fact that \(\dim_{\ring} \Delta^\lambda V = \dim_{\ring} \nabla^\lambda V = \abs{\SSYT_\entryset(\lambda)}\).
\end{remark}

\begin{proposition}
\label{prop:Weyl_endofunctor_quotient_of_symmetric_power}
There is equality \(\ker \wedgemap = \coGR{\lambda}{V}\) (and consequently \(\coGR{\lambda}{V}\) is an \(\ring G\)-module), and hence there is an \(\ring G\)-isomorphism
\[
    \copolytab^\lambda(V) \iso \faktor{\Sym_{\lambda} V }{\coGR{\lambda}{V}}.
\]
\end{proposition}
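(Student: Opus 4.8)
The plan is to prove the two inclusions $\coGR{\lambda}{V} \subseteq \ker\wedgemap$ and $\ker\wedgemap \subseteq \coGR{\lambda}{V}$ separately. The first is precisely \Cref{prop:row_Garnir_relations_evaluate_to_zero}, so the work lies in the reverse inclusion. For this I would take an arbitrary $x \in \ker\wedgemap$, use the straightening algorithm to rewrite it modulo $\coGR{\lambda}{V}$ as an $\ring$-linear combination of semistandard row symmetrisations, and then appeal to the linear independence of semistandard copolytabloids to force that combination to be trivial.

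In more detail: since $\Sym_\lambda V$ has $\ring$-basis $\setbuild{\rsym(t)}{t \in \RSSYT(\lambda)}$, I would expand $x = \sum_t c_t \rsym(t)$. \Cref{lemma:standard_expression_for_row_symmetrisation} writes each $\rsym(t)$ as $\sum_{s \in \SSYT(\lambda)} a_{s,t}\rsym(s)$ minus an $\ring$-linear combination of dual snake relations, and every dual snake relation is a dual Garnir relation and hence lies in $\coGR{\lambda}{V}$. Summing $\ring$-linearly over the support of $x$ gives $x = \sum_{s \in \SSYT(\lambda)} b_s \rsym(s) - \gamma$ with $b_s \in \ring$ and $\gamma \in \coGR{\lambda}{V}$. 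Applying $\wedgemap$ and using that $x \in \ker\wedgemap$ and $\gamma \in \coGR{\lambda}{V} \subseteq \ker\wedgemap$ yields $\sum_{s} b_s \cpt{s} = 0$, so all $b_s = 0$ by \Cref{lemma:semistandard_polysyms_are_lin_indep}, and therefore $x = -\gamma \in \coGR{\lambda}{V}$. This establishes $\ker\wedgemap = \coGR{\lambda}{V}$.

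The remaining claims are formal. The map $\wedgemap$ is $\ring G$-equivariant, being the restriction of the $G$-equivariant projection $V^{\otimes n} \to \Wedge^{\lambda'}V$ to the $\ring G$-submodule $\Sym_\lambda V$; hence $\coGR{\lambda}{V} = \ker\wedgemap$ is an $\ring G$-submodule. Since $\copolytab^\lambda(V)$ is by construction the image of $\wedgemap$, the first isomorphism theorem gives the asserted $\ring G$-isomorphism $\copolytab^\lambda(V) \iso \Sym_\lambda V / \coGR{\lambda}{V}$.

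I do not anticipate a genuine obstacle here: the substantive input — that dual Garnir relations map to zero under $\wedgemap$, the straightening of an arbitrary row symmetrisation into semistandard ones, and the unitriangularity that yields linear independence of the semistandard copolytabloids — has all been established. The only points demanding a little care are that the straightening step introduces only elements of $\coGR{\lambda}{V}$ (valid because dual snake relations are dual Garnir relations) and that it must be applied $\ring$-linearly across the whole expansion of $x$, not merely to a single basis vector.
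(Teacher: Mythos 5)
Your proposal is correct and follows essentially the same route as the paper: establish \(\coGR{\lambda}{V} \subseteq \ker\wedgemap\) via \Cref{prop:row_Garnir_relations_evaluate_to_zero}, then use the straightening of \Cref{lemma:standard_expression_for_row_symmetrisation} together with the linear independence of semistandard copolytabloids from \Cref{lemma:semistandard_polysyms_are_lin_indep} to show any kernel element lies in the span of the dual snake relations, with the \(\ring G\)-module claims following formally from \(G\)-equivariance of \(\wedgemap\). Your only addition is to make explicit the \(\ring\)-linear extension of the straightening lemma from a single row symmetrisation to an arbitrary element of \(\Sym_\lambda V\), a step the paper leaves implicit.
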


\begin{proof}
From \Cref{prop:row_Garnir_relations_evaluate_to_zero}, we have that \(\coGR{\lambda}{V} \subseteq \ker \wedgemap\).
It therefore suffices to show that the dual snake relations span \(\ker \wedgemap\).

Let \(\kappa \in \ker \wedgemap\).
By \Cref{lemma:standard_expression_for_row_symmetrisation} there exists an \(\ring\)-linear combination \(\gamma\) of dual snake relations such that
\begin{align*}
    \kappa + \gamma &= \sum_{s \in \SSYT(\lambda)} \alpha_s \rsym(s)
\intertext{for some elements \(\alpha_s \in \ring\).
Applying \(\wedgemap\) and using that \(\coGR{\lambda}{V} \subseteq \ker \wedgemap\), we find}
    0 &= \sum_{s \in \SSYT(\lambda)} \alpha_s \cpt{s}.
\end{align*}
But the semistandard copolytabloids are \(\ring\)-linearly independent by \Cref{lemma:semistandard_polysyms_are_lin_indep}, so \(\alpha_s = 0\) for all \(s\).
Hence \(\kappa = - \gamma\) is in the span of the dual snake relations.
\end{proof}

The statements of \Cref{thm:copolytabloids_obey_dual_Garnir} are now all proved.

\section{Space of copolytabloids is dual to the Schur endofunctor}
\label{sec:copolytab_dual_to_Schur}

We prove the first isomorphism in \Cref{thm:constructions_work}, verifying that the space of copolytabloids indeed models the Weyl endofunctor.
A special case of the calculation in the proof below, for the tableau \(t\) defined by \(t(i,j) = i\), was given by Wildon \cite[\S 3.2]{Wildon2020WeylModule}.

\begin{theorem}
\label{thm:copolytab_dual_to_Schur}
There is an isomorphism
\[
\Delta^\la V  \iso  \copolytab^\la(V).
\]
\end{theorem}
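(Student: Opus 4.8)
The goal is to show $\Delta^\la V = (\nabla^\la(V^\dual))^\dual \iso \copolytab^\la(V)$. The plan is to produce a perfect pairing between $\copolytab^\la(V)$ and $\nabla^\la(V^\dual) \iso \polytab^\la(V^\dual)$, and then verify that it is $G$-equivariant in the appropriate sense so that it identifies one space with the dual of the other. Fix a basis $\entryset$ of $V$ and let $\entryset^\dual$ be the dual basis of $V^\dual$. The tensor power $V^{\otimes n}$ has basis indexed by tableaux with entries in $\entryset$, and $(V^\dual)^{\otimes n} \iso (V^{\otimes n})^\dual$ via the dual basis; under this identification a tableau $t$ over $\entryset$ pairs with a tableau $u$ over $\entryset^\dual$ by $\langle t, u\rangle = \prod_{b} \delta_{t(b), u(b)}$, i.e.\ the pairing is $1$ if the tableaux are "equal" (entry for entry) and $0$ otherwise. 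I would first record how this pairing behaves under the relevant projections: the projection $V^{\otimes n}\onto \Wedge^{\la'}V$ is adjoint (up to the sign conventions in the column tabloids) to the inclusion of the antisymmetric tensors $\Wedge_{\la'}(V^\dual) \hookrightarrow (V^\dual)^{\otimes n}$, and the projection $(V^\dual)^{\otimes n} \onto \Sym^\la(V^\dual)$ is adjoint to the inclusion $\Sym_\la V \hookrightarrow V^{\otimes n}$. Thus the map $\wedgemap\colon \Sym_\la V \to \Wedge^{\la'}V$ and the Schur-module map $e\colon \Wedge^{\la'}(V^\dual) \to \Sym^\la(V^\dual)$ (the one sending $\act{t}\mapsto\polyt{t}$, here over $\entryset^\dual$) are mutually adjoint with respect to these pairings.

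The key computation is then the following: for a tableau $t$ over $\entryset$ and a tableau $u$ over $\entryset^\dual$, compute $\langle \cpt{t}, \polyt{u}\rangle$ where the pairing is the one between $\Wedge^{\la'}V$ and $\Wedge^{\la'}(V^\dual)$ (equivalently $\Sym_\la V$ against $\Sym^\la(V^\dual)$, by adjointness). Using $\cpt{t}=\wedgemap(\rsym(t))$ and adjointness, $\langle \cpt{t}, \act{u}\rangle = \langle \rsym(t), \act u\rangle$; expanding $\rsym(t)$ as a sum over $\rcosets{\RPP(\la)}{\rstab t}$ and $\act u$ as a signed sum over $\CPP(\la)$, this counts (with signs) the pairs $(\tau,\sigma)$ with $t\ppa\tau = u\ppa\sigma$ entrywise. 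Equivalently $\langle\cpt{t},\polyt{u}\rangle$ counts signed place-permutations carrying $u$ to a row-rearrangement of $t$; this is exactly the kind of sum that appears in the classical proof that the standard (semistandard) bilinear form between a Specht/Weyl module and its Schur counterpart is unitriangular. Concretely, I would restrict attention to $t, u$ semistandard (over $\entryset$, resp.\ $\entryset^\dual$, which I identify as sets) and show $\langle \cpt{t}, \polyt{u}\rangle = \pm\delta_{t,u}$, or more precisely that the matrix $\bigl(\langle\cpt s,\polyt u\rangle\bigr)_{s,u\in\SSYT}$ is (lower- or upper-) unitriangular with respect to a suitable order on semistandard tableaux — here Proposition~\ref{lemma:semistandard_polysyms_are_lin_indep} already tells us $\cpt s = \act s + \sum_{s\colless u} m_u\act u$, and a dual statement $\polyt u = \rt u + \sum \ldots$ holds for polytabloids; pairing these two triangular expansions against the elementary pairing $\langle \act s, \rt u\rangle$ (which is $\delta_{su}$ up to sign, since $\rt u$ picks out the row-class of $u$ and $\act s$ is supported on column-rearrangements of $s$) gives the triangularity of the pairing matrix and hence its nondegeneracy.

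From the nondegenerate pairing I would conclude: the pairing descends to a perfect pairing between $\copolytab^\la(V)$ (spanned by the $\cpt s$, $s\in\SSYT$, which are linearly independent by Proposition~\ref{lemma:semistandard_polysyms_are_lin_indep}) and $\polytab^\la(V^\dual) \iso \nabla^\la(V^\dual)$ (with basis the $\polyt u$, $u\in\SSYT$ over $\entryset^\dual$); indeed both have rank $\abs{\SSYT_\entryset(\la)}$ and the $\SSYT\times\SSYT$ Gram matrix is unitriangular hence invertible over $\ring$. A perfect $\ring$-bilinear pairing $\copolytab^\la(V)\times\nabla^\la(V^\dual)\to\ring$ gives $\copolytab^\la(V)\iso(\nabla^\la(V^\dual))^\dual=\Delta^\la V$ as $\ring$-modules; finally I would check $G$-equivariance, i.e.\ that $\langle gx, y\rangle = \langle x, g^{-1}y\rangle$ for $x\in\Sym_\la V$, $y\in\Sym^\la(V^\dual)$, which is immediate from the definition of the $G$-action on $V^\dual$ and the fact that the diagonal $G$-action on $V^{\otimes n}$ is compatible with the standard pairing $\langle v_1\otimes\cdots, f_1\otimes\cdots\rangle = \prod f_i(v_i)$. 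This upgrades the module isomorphism to a $G$-isomorphism, completing the proof.

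The main obstacle I anticipate is bookkeeping the signs and the order conventions: one must be careful that the "adjoint" of the alternating column-tabloid quotient $V^{\otimes n}\onto\Wedge^{\la'}V$ really is (up to a global sign depending only on $\la$) the antisymmetrisation inclusion on the dual side, and that the two triangularity statements (for $\cpt{\blank}$ in the column order and for $\polyt{\blank}$ in whatever order makes the Schur-side expansion triangular) are compatible enough that their "product" pairing is genuinely unitriangular rather than merely triangular. Getting a clean statement will likely require choosing the orders on $\SSYT_\entryset(\la)$ and $\SSYT_{\entryset^\dual}(\la)$ to be opposite to one another (via the order-reversing bijection $\entryset\leftrightarrow\entryset^\dual$), after which the elementary pairing $\langle\act s,\rt u\rangle$ is diagonal and the whole argument collapses to the nondegeneracy of a unitriangular matrix over the ground ring.
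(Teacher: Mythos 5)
Your overall strategy (dualise via a bilinear pairing, using that \(\wedgemap\) and \(e\) are adjoint) is sound, and the adjointness identity \(\langle\cpt{t},\act{u}\rangle=\langle\rsym(t),\polyt{u}\rangle\) is essentially the same computation as the paper's; but as written there are two genuine gaps. First, the step that is supposed to give nondegeneracy is not well defined: there is no ``elementary pairing \(\langle\act{s},\rt{u}\rangle\)'' between \(\Wedge^{\la'}V\) and \(\Sym^\la(V^\dual)\), since these are quotients of \(V^{\otimes n}\) and of \((V^\dual)^{\otimes n}\) by \emph{different} place-permutation relations, and the tensor pairing does not descend to both sides simultaneously; the only pairings available are the ones you set up via adjointness, namely \(\Wedge^{\la'}V\times\Wedge^{\la'}(V^\dual)\) and \(\Sym_\la V\times\Sym^\la(V^\dual)\). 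What your Gram entry actually equals is \(\langle\cpt{s},\polyt{u}\rangle=\sum_{\sigma\in\CPP(\la)}\sgn(\sigma)\,\one{u\ppa\sigma\rowsim s^\dual}\), and the claimed unitriangularity of this matrix over \(\SSYT\times\SSYT\) is a genuine combinatorial lemma that you assert but do not prove; note also that the paper's closing remark states explicitly that \(\setbuild{\cpt{s}}{s\in\SSYT}\) is \emph{not} dual to \(\setbuild{\polyt{s}}{s\in\SSYT}\), so your first guess \(\langle\cpt{t},\polyt{u}\rangle=\pm\delta_{t,u}\) is false and only the weaker triangular statement could survive, with the order-matching issue you yourself flag left unresolved.

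Second, your conclusion quietly uses that \(\copolytab^\la(V)\) is \emph{spanned} by the semistandard copolytabloids, but you cite only \Cref{lemma:semistandard_polysyms_are_lin_indep}, which gives linear independence; spanning is \Cref{thm:copolytabloids_obey_dual_Garnir}(b), proved by the straightening algorithm of \Cref{sec:copolytabs_obey_dual_Garnir}, which the paper's proof of this theorem deliberately avoids (the paper even derives spanning \emph{from} this theorem in a remark, so invoking it here risks circularity unless you commit to the straightening route). Without spanning, your pairing argument only identifies \(\Delta^\la V\) with the \(\ring\)-span of the semistandard copolytabloids, a priori a proper submodule of \(\copolytab^\la(V)\). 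The gap is repairable: injectivity of the induced map \(\copolytab^\la(V)\to(\nabla^\la(V^\dual))^\dual\) follows cheaply from perfectness of the ambient pairing \(\Wedge^{\la'}V\times\Wedge^{\la'}(V^\dual)\) (if \(\langle x,e(y)\rangle=0\) for all \(y\) then \(\langle x,y\rangle=0\) for all \(y\), so \(x=0\)), and surjectivity can be obtained without any basis theorem by computing, as the paper does, that \(\psi e^\dual(\rt{t^\dual}^\dual)=\cpt{t}\) for every row semistandard \(t\), since the functionals \(\rt{t^\dual}^\dual\) restrict to span \(\Delta^\la V\). That direct computation is exactly what replaces your unproved unitriangularity lemma.
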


\begin{proof}
We view \(\Delta^\la V = (\nabla^\lambda V^\dual)^\dual\) as a submodule of \((\Wedge^{\lambda'} V^\dual)^\dual\) via the dual of the surjective map \(e \colon \Wedge^{\lambda'}V^\dual \to \nabla^\lambda V^\dual\); that is, by the injective map \(e^\dual \colon (\nabla^\lambda V^\dual)^\dual \to (\Wedge^{\lambda'} V^\dual)^\dual\) defined by \(e^\dual(\theta)(x) = \theta(e(x))\) for \(\theta \in (\nabla^\lambda V^\dual)^\dual\) and \(x \in \Wedge^{\lambda'} V^\dual\).
Meanwhile we view \((\Wedge^{\lambda'} V^\dual)^\dual\) as isomorphic to \(\Wedge^{\lambda'}V\) via the isomorphism \(\psi\) defined by \( (v_{i_1}^\dual \wedge \cdots \wedge v_{i_r}^\dual)^\dual \mapsto v_{i_1} \wedge \cdots \wedge v_{i_r}\) (see \cite[Proposition~3.3]{mcdowell2021thesis} for the routine verification that this is an isomorphism).
Thus it suffices to show \(\im(\psi e^\dual) = \copolytab^\la(V)\).

Let \(\entryset^\dual\) denote the \(\ring\)-basis of \(V^\dual\) dual to \(\entryset\), and, given a tableau \(t\) with entries in \(\entryset\), let \(t^\dual\) denote the tableau obtained from \(t\) by replacing each entry from \(\entryset\) with its dual from \(\entryset^\dual\).
Let \(\rt{t^\dual}^\dual\) denote the element dual to \(\rt{t^\dual}\) in the basis of \((\tabspace{\lambda}{V^\dual})^\dual\) dual to \(\setbuild{\rt{s}}{s \in \RSSYT_{\entryset^\dual}(\lambda)}\).
Since \(\nabla^\lambda V^\dual \subseteq \Sym^\lambda V^\dual\), we can restrict a function \(\rt{t^\dual}^\dual \in (\Sym^\la V^\dual)^\dual\) to \(\nabla^\lambda V^\dual\), and moreover all elements of \((\nabla^\lambda V^\dual)^\dual\) can be obtained this way.
Thus \(\Delta^\la V\) is \(\ring\)-spanned by \(\setbuild{ \rt{t^\dual}^\dual}{ t \in \RSSYT_\entryset(\la)}\), and it suffices to show that \(\psi e^\dual(\rt{t^\dual}^\dual) \in \copolytab^\la(V)\) for all \(t \in \RSSYT_\entryset(\la)\).

Fix \(t \in \RSSYT_\entryset(\lambda)\), and we will show that \(\psi e^\dual(\rt{t^\dual}^\dual) = \cpt{t}\).
In what follows, given a statement \(\mathsf{P}\),
we set \(\one{\mathsf{P}} = 1\) if \(\mathsf{P}\) is true and 
        \(\one{\mathsf{P}} = 0\) if \(\mathsf{P}\) is false.
For any \(u \in \CSYT_{\entryset^\dual}(\lambda)\) we have
\begin{align*}
e^\dual ( \rt{t^\dual}^\dual )( \act{u} )
    &= \rt{t^\dual}^\dual( e(u) ) \\
    &= \rt{t^\dual}^\dual \sum_{\sigma \in \CPP(\lambda)} \sign(\sigma) \rt{u \ppa \sigma} \\
    &= \sum_{\sigma \in \CPP(\lambda)} \sign(\sigma) \one{\rt{u\ppa \sigma} = \rt{t^\dual}} \\
    &= \sum_{\tau \in \rcosets{\RPP(\lambda)}{\rstab{t^\dual}}}
        \sum_{\sigma \in \CPP(\lambda)}
            \sign(\sigma)
            \one{t^\dual \ppa \tau = u \ppa \sigma}
\end{align*}
where the last equality holds as there is at most one element \(\tau \in \rcosets{\RPP(\lambda)}{\rstab{t^\dual}}\) such that \(t^\dual \ppa \tau = u \ppa \sigma\), and such an element exists if and only if \(t^\dual\) and \( u \ppa \sigma\) have the same multisets of entries in each row (that is, 
if and only if \(\rt{t^\dual} = \rt{u \ppa \sigma}\)).

We employ a similar argument to collapse the sum over \(\CPP(\lambda)\).
Since \(u\) is column standard and hence has distinct entries within a column, there is at most one element \(\sigma \in \CPP(\lambda)\) such that \(t^\dual \ppa \tau = u \ppa \sigma\), and such an element exists if and only if \(t^\dual \ppa \tau\) and \(u\) have the same multisets of entries in each column. 
Write \(\colsorted{s}\) for the unique column semistandard tableau obtained from a tableau \(s\) by sorting all the columns into ascending order;
thus \(t^\dual \ppa \tau\) and \(u\) have the same multisets of entries if and only if \(\colsorted{(t^\dual \ppa \tau)} = u\).
Defining \(\sign(s \mapsto \colsorted{s})\) to be the sign of the unique column-preserving permutation which makes \(s\) into a column standard tableau (if it exists; defining \(\sign(s \mapsto \colsorted{s}) = 0\) if \(s\) does not have distinct column entries),
the expression above becomes
\begin{align*}
e^\dual ( \rt{t^\dual}^\dual )( \act{u} )  
    &= \longsubsum{\sum}{\tau \in \rcosets{\RPP(\lambda)}{\rstab{t^\dual}}}{
        \one{\colsorted{(t^\dual \ppa \tau)} = u} \sign( t^\dual \ppa \tau \mapsto \colsorted{(t^\dual \ppa \tau)} )
    }.
\end{align*}
Thus we have
\begin{align*}
e^\dual ( \rt{t^\dual}^\dual )
    &= \longsubsum{\sum}{\tau \in \rcosets{\RPP(\lambda)}{\rstab{t^\dual}}}{
        \act{\colsorted{(t^\dual \ppa \tau)}}^\dual \sign( t^\dual \ppa \tau \mapsto \colsorted{(t^\dual \ppa \tau)} )
    },
\end{align*}
where for a column standard tableau \(u\), we denote by \(\act{u}^\dual\) the element dual to \(\act{u}\) in the basis of \((\alttabs{\lambda}{V^\dual})^\dual\) dual to  \(\setbuild{\act{s}}{s \in \CSYT_{\entryset^\dual}(\lambda)}\).
Applying \(\psi\) we find that
\begin{align*}
\psi(e^\dual( \rt{t^\dual}^\dual ))
    &= \longsubsum{\sum}{\tau \in \rcosets{\RPP(\lambda)}{\rstab{t}}}{
        \act{\colsorted{(t \ppa \tau)}} \sign( t \ppa \tau \mapsto \colsorted{(t \ppa \tau)} )
    } \\
    &= \longsubsum{\sum}{\tau \in \rcosets{\RPP(\lambda)}{\rstab{t}}}{
        \act{t \ppa \tau}
    } \\
    &= \cpt{t}. \qedhere
\end{align*}
\end{proof}

\begin{remark}
The map \(\psi e^\dual\) in \Cref{thm:copolytab_dual_to_Schur} does \emph{not} send \(\polyt{t^\dual}^\dual\), the element dual to a polytabloid, to the copolytabloid \(\cpt{t}\).
Furthermore, it is \emph{not} the case that the basis \(\setbuild{\cpt{s}}{s \in \SSYT_\entryset(\lambda)}\) is dual to the basis \(\setbuild{\polyt{s}}{s \in \SSYT_{\entryset^\dual}(\lambda)}\).
\end{remark}

The second isomorphism in \Cref{thm:constructions_work} follows from \Cref{thm:copolytabloids_obey_dual_Garnir} (proved in the previous section), and both our main theorems are established.

\section*{Acknowledgements}

\noindent
The author is grateful to Mark Wildon for comments on an earlier version of this paper.

\bibliographystyle{alpha}
\bibliography{references}

\end{document}